\numberwithin{equation}{section}
\def\mF{\mathcal{F}}
\newtheorem{thm}{Theorem}[section]
\newtheorem{lemma}[thm]{Lemma}
\newtheorem{prop}[thm]{Proposition}
\newtheorem{cor}[thm]{Corollary}
\theoremstyle{definition}
\newtheorem{rem}[thm]{Remark}
\theoremstyle{definition}
\newcommand{\be}{\begin{eqnarray}}
\newcommand{\ee}{\end{eqnarray}}
\newcommand{\comment}[1]{}
\begin{document}
\title{Flat vector bundles and open coverings}
\author[Huitao Feng]{Huitao Feng$^1$}
\author[Weiping Zhang]{Weiping Zhang$^2$}

\address{Huitao Feng: Chern Institute of Mathematics \& LPMC,
Nankai University, Tianjin 300071,  P. R. China}

\email{fht@nankai.edu.cn}

\address{Weiping Zhang: Chern Institute of Mathematics \& LPMC,
Nankai University, Tianjin 300071, P. R.  China}

\email{weiping@nankai.edu.cn}

\thanks{$^1$~Partially supported by NSFC (Grant No. 11221091, 11271062, 11571184)}

\thanks{$^2$~Partially supported by NSFC (Grant No. 11221091)}

\date{}  
\maketitle
\setcounter{section}{-1}
\begin{abstract}
We establish a generic counting formula for the Euler number of a flat vector bundle  of rank $2n$ over a $2n$ dimensional  closed manifold, in terms of   vertices of transversal open coverings of the underlying manifold.  We use the Mathai-Quillen formalism to prove our result.

\comment{

\begin{flushleft}
Keywords: affine manifold,\quad  Chern conjecture,\quad Mathai-Quillen formalism,\quad flat connection,\quad Euler characteristic
\end{flushleft}

}

\end{abstract}

\tableofcontents

\section{Introduction}\label{S0}

By the celebrated Gauss-Bonnet-Chern theorem \cite{C44} and its ramifications, if a real   vector  bundle $F$  of rank $2n$ over a $2n$ dimensional  closed smooth manifold $M$ admits a flat connection preserving certain Euclidean metric on $F$, then the Euler number  of $F$ vanishes: $\langle e(F),[M]\rangle=0$, where $e(F)$ is the Euler class of $F$.   Milnor \cite{M58} constructs examples of   rank two flat vector bundles over a closed surface of genus $g\geq 2$ such that the corresponding Euler number is nonzero. A closely related question is the famous Chern conjecture on the vanishing of the Euler characteristic of a closed affine manifold (cf.  \cite{K15}).

Our original motivation is to understand the Chern conjecture by using the Mathai-Quillen formalism on the geometric construction of the Thom class \cite{MQ86}. While our efforts \cite{FZ16} on proving the Chern conjecture in this way is not successful, we find that our method leads us to a kind of mystic  counting formula for the Euler number of a flat vector bundle in terms of   transversal open coverings of the underlying manifold. The purpose of this paper is to present this mystic formula.

This paper is organized as follows.  In Section \ref{S1}, we present the exterior algebra version of  the Mathai-Quillen formalism, which allows us to avoid the usual difficulty in computing the Euler class that the   connection in question need  not preserve any  metric on a given vector bundle. In Section \ref{S2},    
we  prove our main result, which is stated as Theorem \ref{t0.2} in Section \ref{s2.1}.

\section{An exterior  algebra version of  the Mathai-Quillen formalism}\label{S1}

In this section, we present  an exterior  algebra version of the Mathai-Quillen formalism \cite{MQ86}. That is,   we replace the spinor bundle considered in \cite{MQ86} by the exterior algebra bundle. This gives a formula for the Euler class  for an arbitrary connection on a Euclidean vector bundle.  

Let  $\pi:E\rightarrow M$ be a real oriented vector bundle of even rank over a   closed oriented manifold $M$. 
Let $\nabla^{E}$ be a connection on $E$. Then it induces  a connection  $\nabla^{\Lambda^*(E^*)}$ on $\Lambda^*(E^*)$, which preserves the ${\bf Z}_2$-splitting $\Lambda^*(E^*)=\Lambda^{\rm even}(E^*)\oplus\Lambda^{\rm odd}(E^*). $

Let $g^{E}$ be a Euclidean metric on $E$. For any $Z\in E$, let $Z^*\in E^*$ be its metric dual. As usual (cf. \cite[Section 4.3]{Z01}), let $c(Z)$  be the Clifford action on $\Lambda^*(E^*)$ defined by 
\begin{align}\label{1.1}
 c(Z)=Z^*\wedge-i_Z , 
\end{align}
where $Z^*\wedge$ (resp. $i_Z$) is the exterior  (resp. interior) multiplication of $Z^*$ (resp. $Z$).
Then
\begin{align}\label{1.2a}
c(Z)^2=-|Z|^2_{g^{E}}.
\end{align}

Let $A$ be  the superconnection \cite{Q85} on $\pi^* \Lambda^*(E^*)$ defined by
\begin{align}\label{1.2}
A=\pi^* \nabla^{\Lambda^*(E^*)} +c(Z),
\end{align}
where $c(Z)$ now acts on $(\pi^*\Lambda^*(E^*))|_Z$ such that for any $\alpha\in \Lambda^*(E^*)$, then 
\begin{align}\label{1.2c}
 c(Z)\left(\left(\pi^*\alpha\right)|_Z\right)=\left.\left(\pi^*(c(Z)\alpha)\right)\right|_Z.
\end{align}

By (\ref{1.2a}) and (\ref{1.2}), $\exp(A^2)$ is of exponential decay along vertical directions of $E$.

\begin{thm}\label{t1.1}
The  closed form $ \left(\frac{1}{2\pi}\right)^{{\rm rk}(E)}\int_{E/M} {\rm tr}_s[\exp(A^2)]$ on $M$ is a representative of $\frac{e(E)}{\widehat A(E)^2}$, where  $e(E)$ is the Euler class of $E$ and
 $\widehat A(E)$ is the Hirzebruch $ \widehat A$-class of $E$.
\end{thm}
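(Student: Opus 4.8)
The plan is to compute the fiber integral by a Mathai--Quillen--type argument, imitating the spinor case in \cite{MQ86} but with the exterior algebra bundle $\Lambda^*(E^*)$ in place of the spinor bundle. First I would expand the curvature $A^2$ of the superconnection $A=\pi^*\nabla^{\Lambda^*(E^*)}+c(Z)$. Using $(\pi^*\nabla^{\Lambda^*(E^*)})^2=\pi^*R^{\Lambda^*(E^*)}$ together with $[\pi^*\nabla^{\Lambda^*(E^*)},c(Z)]=c(\nabla^E Z)=c(dZ)$ (the covariant differential of the tautological section, after the identification \eqref{1.2c}) and $c(Z)^2=-|Z|^2_{g^E}$ from \eqref{1.2a}, one gets
\begin{align*}
A^2=-|Z|^2_{g^E}+c\bigl(\nabla^E Z\bigr)+\pi^*R^{\Lambda^*(E^*)},
\end{align*}
where the first term gives the Gaussian decay, the middle term is linear in the fiber coordinate and carries the base one-forms, and $R^{\Lambda^*(E^*)}$ is the curvature of $E$ acting as a derivation on $\Lambda^*(E^*)$.

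Next I would split the ${\bf Z}_2$-graded trace ${\rm tr}_s$. Since the Clifford algebra of a $2n$-dimensional Euclidean space acts irreducibly (after complexification) and $\Lambda^*(E^*)\otimes\C$ is, fiberwise, a sum of $2^n$ copies of the spinor module, the supertrace over $\Lambda^*(E^*)$ factors through the spinor supertrace times a Clifford-commutant factor; one tracks this extra factor and shows it produces exactly $\widehat A(E)^{-2}$. Concretely, I would use the standard Berezin/Clifford-algebra evaluation: write ${\rm tr}_s[\exp(A^2)]$ as a Berezin integral over the Clifford variables, perform the Gaussian integral over the vertical fiber $Z$ (which contributes $(2\pi)^{{\rm rk}(E)/2}$ times the Pfaffian-type normalization), and identify the resulting base form. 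The spinor part of this computation is precisely Mathai--Quillen and yields the Euler form ${\rm Pf}(-R^E/2\pi)=e(E)$; the commutant part yields the $\det{}^{1/2}$ factors that assemble into $\widehat A(E)^{-2}$, exactly because $\Lambda^*(E^*)\cong S\otimes S^*$ (as a $\Cl(E)$-module with its commuting $\Cl(E)$-action) so that the supertrace over $\Lambda^*(E^*)$ is the product of the spinor supertrace with the trace over $S^*$, and that trace over a spinor bundle with the curvature-induced connection produces the inverse $\widehat A$-genus by the usual local index argument. That $\exp(A^2)$ is of exponential decay along the fibers, noted after \eqref{1.2}, justifies the absolute convergence of the fiber integral and its independence of $g^E$ and of the connection within its homotopy class, giving closedness and the correct de Rham class.

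The main obstacle I expect is bookkeeping the ${\bf Z}_2$-graded trace over $\Lambda^*(E^*)$ correctly, i.e. disentangling the genuine Clifford action $c(Z)$ (which enters $A$) from the other, commuting Clifford action that $\Lambda^*(E^*)$ carries, and checking that the curvature term $\pi^*R^{\Lambda^*(E^*)}$ splits as $R^S\otimes 1+1\otimes R^{S^*}$ under this identification so that the two tensor factors genuinely decouple in the supertrace. Once this decoupling is established, the spinor factor is the classical Mathai--Quillen computation and the complementary factor is a short Chern--Weil identification with $\widehat A(E)^{-2}$. A secondary technical point is that $\nabla^E$ need not preserve $g^E$, so $\nabla^E Z$ (equivalently $c(\nabla^E Z)$) is not skew-adjoint with respect to the fiber metric; I would handle this exactly as in Mathai--Quillen by noting the fiber integral still converges and is a closed form, and its cohomology class is computed by deforming to a metric-compatible connection, along which $R^E$ becomes skew and the Pfaffian/$\widehat A$ formulas apply verbatim.
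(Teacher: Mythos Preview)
Your approach is essentially the paper's: deform to a metric connection using the Chern--Weil/transgression argument for superconnections (and exponential decay), pass locally to the spin case, use $\Lambda^*(E^*)\cong S(E)\widehat\otimes S^*(E)$ to factor the supertrace as
\[
{\rm tr}_s\bigl[\exp(A^2)\bigr]={\rm tr}_s\bigl[\exp((\pi^*\nabla^{S(E)}+c(Z))^2)\bigr]\cdot{\rm tr}_s\bigl[\exp((\pi^*\nabla^{S^*(E)})^2)\bigr],
\]
and then invoke Mathai--Quillen on the first factor. This is exactly what the paper does.

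There is, however, a genuine slip in your bookkeeping of the two factors. You write that the spinor Mathai--Quillen piece ``yields the Euler form ${\rm Pf}(-R^E/2\pi)$'' and that the commutant piece yields $\widehat A(E)^{-2}$. In fact it is the other way around. By \cite[Theorem~4.5]{MQ86} the spinor factor equals $\det\bigl(\tfrac{\sinh(R^E/2)}{R^E/2}\bigr)^{1/2}$ times the Thom form $U$; since $\int_{E/M}U=1$, the fiber integral of the spinor factor is $\widehat A(E)^{-1}$, not $e(E)$. The $S^*$ factor is a \emph{super}trace (not a trace) and equals ${\rm ch}(S_+^*(E)-S_-^*(E))=e(E)/\widehat A(E)$ by the standard identity (cf.\ \cite[Prop.~III.11.24]{LM89}); this is where the Euler class actually enters. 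The product then gives $e(E)/\widehat A(E)^2$ as claimed. So your strategy is right, but the attribution of $e(E)$ and the two $\widehat A^{-1}$'s to the tensor factors needs to be corrected.
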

\begin{proof}
By   the Chern-Weil theory for superconnections (cf. \cite[Proposition 1.43]{BGV92}) and the above mentioned exponential decay property of $\exp(A^2)$, we see that the cohomology class represented  by $\int_{E/M} {\rm tr}_s[\exp(A^2)]$ does not depend on the choice of $\nabla^{E}$ and $g^E$.  Thus, we may well assume  that  $\nabla^{E}$ preserves $g^{E}$. Then one can follow the strategy in \cite{MQ86}.

In fact, since the computation is local, one may well assume that $E$ is spin. Then one has   the following decomposition  (cf. \cite{LM89}) in terms of  the (Hermitian) spinor bundle $S(E)=S_+(E)\oplus S_-(E)$ associated to $(E,g^E)$,
\begin{align}\label{1.4}
\Lambda^*(E^*)=\left(S_+(E)\oplus S_-(E)\right)\widehat\otimes \left(S^*_+(E)\oplus S^*_-(E)\right),
\end{align}
and $c(Z)$ now acts on $(\pi^*S(E))|_Z$.
Moreover, $\nabla^{\Lambda^*(E^*)}$ decomposes to $\nabla^{S(E)}\otimes {\rm Id}_{S^*(E)}+{\rm Id}_{S(E)}\otimes \nabla^{S^*(E)}$, where $\nabla^{S(E)}$, $\nabla^{S^*(E)}$ are the induced  Hermitian connections on $S(E)$, $S^*(E)$ respectively.  

From  (\ref{1.2}) and  (\ref{1.4}), one gets
\begin{align}\label{1.5}
{\rm tr}_s\left[\exp\left(A^2\right)\right]={\rm tr}_s\left[ \exp\left(\left(\pi^*\nabla^{S(E)}+c(Z)\right)^2\right)\right]\cdot  {\rm tr}_s\left[\exp\left(\left(\pi^*\nabla^{S^*(E)}\right)^2\right)\right]    .
\end{align}

By \cite[Theorem 4.5]{MQ86}, one has
\begin{align}\label{1.6}
\left(\frac{\sqrt{-1}}{2\pi}\right)^{\frac{{\rm rk}(E)}{2}}{\rm tr}_s\left[ \exp\left(\left(\pi^*\nabla^{S(E)}+c(Z)\right)^2\right)\right] = 
(-1)^{\frac{{\rm rk}(E)}{2}}\det\left(\frac{\sinh\left(\pi^*R^{E}/2\right)}{\pi^*R^{E}/2}\right)^{\frac{1}{2}}U,
\end{align}
where $R^{E}=(\nabla^{E})^2$   and $U$ is the Thom form constructed in \cite[(4.7)]{MQ86}. 

By  \cite[Proposition  III.11.24]{LM89}, one has that  
\begin{align}\label{1.7}
 {\rm ch}\left(S^*_+(E)-S^*_-(E)\right) = \frac{e(E)}{ \widehat A(E)}.
\end{align}

By (\ref{1.5})-(\ref{1.7})  and  \cite[Theorem 4.10]{MQ86},  which integrates $U$ along vertical fibers, one sees that
$ 
 \left(\frac{1}{2\pi}\right)^{{\rm rk}(E)}\int_{E/M} {\rm tr}_s[\exp(A^2)]
$ 
  is a representative of $ \frac{e(E)}{ \widehat A(E)^2}$. 
\end{proof}

\begin{cor}\label{t1.2}
If ${\rm rk}(E)=\dim M$, then the following identity holds,
\begin{align}\label{1.7z}
 \langle e(E),[M]\rangle =  \left(\frac{1}{2\pi}\right)^{{\rm rk}(E)}\int_{E} {\rm tr}_s\left[\exp\left(A^2\right)\right].
\end{align}
\end{cor}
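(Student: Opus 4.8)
The plan is to read off the identity directly from Theorem \ref{t1.1} by a degree count. By that theorem the closed form
\begin{align*}
\Phi:=\left(\frac{1}{2\pi}\right)^{{\rm rk}(E)}\int_{E/M}{\rm tr}_s\left[\exp\left(A^2\right)\right]
\end{align*}
on $M$ represents the class $\frac{e(E)}{\widehat A(E)^2}\in H^*(M;\R)$. Since $\widehat A(E)=1+(\text{a sum of terms of degree}\ \geq 4)$, its inverse square has the form $\widehat A(E)^{-2}=1+(\text{a sum of terms of positive degree})$, and hence
\begin{align*}
\frac{e(E)}{\widehat A(E)^2}=e(E)+e(E)\wedge\bigl(\text{terms of positive degree}\bigr)
\end{align*}
in $H^*(M;\R)$.

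Next I would use the hypothesis ${\rm rk}(E)=\dim M$. The Euler class $e(E)$ lies in $H^{{\rm rk}(E)}(M;\R)=H^{\dim M}(M;\R)$, so every summand $e(E)\wedge\eta$ with $\deg\eta>0$ lies in $H^{>\dim M}(M;\R)=0$. Therefore $\frac{e(E)}{\widehat A(E)^2}$ and $e(E)$ have the same image in $H^{\dim M}(M;\R)$, whence
\begin{align*}
\left\langle e(E),[M]\right\rangle=\left\langle\frac{e(E)}{\widehat A(E)^2},[M]\right\rangle=\int_M\Phi.
\end{align*}
It then remains to identify $\int_M\Phi$ with the right-hand side of (\ref{1.7z}): this is Fubini's theorem for the fibration $\pi:E\to M$, i.e. $\int_M\int_{E/M}=\int_E$, which is legitimate because $\exp(A^2)$ is of exponential decay along the vertical fibers (by (\ref{1.2a}) and (\ref{1.2})), so the integral over the noncompact total space $E$ converges and agrees with the iterated integral. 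Combining the two displays above yields (\ref{1.7z}).

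Being a formal corollary of Theorem \ref{t1.1}, the statement presents no genuine obstacle. The only two points deserving attention are precisely the ones isolated above: first, that the $\widehat A$-class correction occurring in $\frac{e(E)}{\widehat A(E)^2}$ contributes only in degrees strictly above $\dim M$ and hence drops out upon pairing with the fundamental class $[M]$; and second, that the integral over the total space $E$ is well defined and satisfies Fubini, which rests on the exponential decay of $\exp(A^2)$ already built into the construction of $A$.
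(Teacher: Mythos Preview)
Your argument is correct and is exactly the intended reading of the corollary: the paper states it without proof immediately after Theorem~\ref{t1.1}, and the only content is the degree count you give (so that $\widehat A(E)^{-2}$ contributes nothing when paired with $[M]$) together with Fubini on the total space, justified by the vertical exponential decay of $\exp(A^2)$.
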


\section{A counting formula for the Euler number of flat vector bundles}\label{S2}

In this section, we formulate and prove a generic counting formula for the Euler number of a flat vector bundle in terms of transversal open coverings.  

This section is organized as follows. In Section \ref{s2.1}, we present the basic setting and state our main result as Theorem \ref{t0.2}. In Section \ref{s2.2}, we present an application of   Corollary \ref{t1.2} to the
special  case of flat vector bundles.  In Sections \ref{s2.3}-\ref{s2.5}, we   prove   Theorem \ref{t0.2}.

\subsection{Flat vector bundles and the counting formula}\label{s2.1}

 Let $\pi:F\rightarrow M$ be a real  oriented flat vector bundle of rank $2n$ over a $2n$ dimensional closed oriented  manifold $M$. Let $\nabla^F$ denote the underlying flat connection on $F$. Then there is a finite collection of open coordinate charts $\{(U_\alpha,(x_\alpha^i))\}$ ($ \alpha=1,\,\cdots,\,N$),\footnote{The ordering of $U_\alpha$'s, while arbitrary, will play an important role in what follows.} covering $M$, such that $\nabla^F$ induces over each $U_\alpha$ a canonical  identification
\begin{align}\label{2.1}
F|_{U_\alpha}\simeq U_\alpha\times {\bf R}^{2n}_\alpha. 
\end{align}
For any $U_\alpha$, we fix a coordinate system $(y_\alpha^i)$ of ${\bf R}^{2n}_\alpha$. Then over $U_\alpha\cap U_\beta$, $(y_\alpha^i)$ and $(y_\beta^i)$ transform to each other constant  linearly.  Moreover, the horizontal exterior differential and the vertical exterior differential 
\begin{align}\label{2.2}
d^H=\sum_i dx_\alpha^i\frac{\partial}{\partial x_\alpha^i} ,\ \ \  d^V=\sum_i dy_\alpha^i\frac{\partial}{\partial y_\alpha^i}
\end{align}
on $F|_{U_\alpha}$ 
do not depend on $\alpha$, and we have the decomposition of the exterior differential on the total space  $\mF$ of $F$ such that
\begin{align}\label{2.3}
d=d^H+ d^V
\end{align}
and that
\begin{align}\label{2.3c}
\left(d^H\right)^2=\left( d^V\right)^2=d^Hd^V+d^Vd^H=0.
\end{align}

Without loss of generality, we assume that each  $U_\alpha$ has a smooth boundary $\partial U_\alpha$ and that the $\partial U_\alpha$'s intersect to each other complete transversally. We call such an open covering of $M$ a transversal open covering.\footnote{It is easy to see that such a transversal open covering always exists.}   Then  any point  on $M$ lies on at most $2n$ different  boundaries.  Moreover, the set 
\begin{align}\label{2.8}
{\bf B}=\{p\in M\,:\, p\ {\rm lies\ on}\ 2n\ {\rm different\ boundaries}\}
\end{align}
consists of finite points.

Let $h:[0,1]\rightarrow [0,1]$ be the smooth function   $h(t)=\exp(-1/t^2)$. 

 Let $g^{TM}$ be any metric on $TM$. 
For each $U_\alpha$,
let $r_\alpha$ be the normal geodesic coordinate near  $\partial U_\alpha$.
Let $\rho_\alpha\in C^\infty(M) $ be    such that
\begin{align}\label{3.8c}  
\rho_\alpha =h\left(r_\alpha\right)
\end{align}
near $\partial U_\alpha$,
  ${\rm Supp}(\rho_\alpha)\subseteq \overline{U}_\alpha$ and 
\begin{align}\label{3.8a}
 \rho_\alpha>0\ \ \   {\rm over}\ \ U_\alpha.
\end{align}
The existence of $\rho_\alpha$ is clear. 

For any function or a smooth form $\rho$ on $M$,
we use the same notation $\rho $ to denote its lift $\pi^*\rho $.

For any $p\in {\bf B}$, set $U_{ p}=\cap_{p\in U_\alpha}U_\alpha$. Then there exists a (sufficiently small) open neighborhood $W_p$ of $p\in M$ with $\overline W_p\subset U_p$ such that for any different $p,\,q\in {\bf B}$, one has $\overline{W}_p\cap\overline{W}_q=\emptyset$.

\begin{rem}\label{t2.1} For any $p\in {\bf B}$, 
let $U_{\alpha_1},\,\cdots,\,U_{\alpha_{N_p}}$, with $\alpha_1>\cdots>\alpha_{N_p}$,  be the open coordinate charts  containing $p$. Without loss of generality, we assume that $\rho_{\alpha_1},\,\cdots,\,\rho_{\alpha_{N_p}}$ equal to $1$ on $W_p$. 
Also, we denote by  $U_{\beta_1},\,\cdots,\,U_{\beta_{2n}}$, with $\beta_1>\cdots>\beta_{2n}$,   the open coordinate charts such that 
$\partial U_{\beta_1},\,\cdots,\,\partial U_{\beta_{2n}}$ intersect at    $p$.  
Moreover, by making $W_p$ small enough, we may well assume that $\overline W_p\cap\overline U_\alpha=\emptyset$ for  any   $\alpha\notin \{\alpha_1,\,\cdots,\,\alpha_{N_p},\,\beta_1,\,\cdots,\,\beta_{2n}\}$.  
\end{rem}

As a final notation, we set
\begin{align}\label{2.8b}
{\bf B}_+=\left\{p\in {\bf B}\,:\, \beta_{2n}>\alpha_1  \right\}.
\end{align}

The following proposition   will be proved in Section \ref{s2.4}. 

\begin{prop}\label{t0.1} For any $p\in{\bf B_+}$, there is a sufficiently small open neighborhood $V_p\subset W_p$ of $p$ such that for any $\phi\in C^\infty(M)$ supported in $V_p$ with $\phi=1$ near $p$, the following limit   exists,
\begin{align}\label{0.1}
\nu_p=\lim_{T\rightarrow +\infty}\int_{\mF}\phi \prod_{i=1}^{2n}\left(\frac{d\rho_{\beta_i}T^{\beta_i}}{4\pi}\sum_{j=1}^{2n}d\left(y_{\beta_i}^j\right)^2\right)
\exp\left(-\sum_{i,\,j=1}^{2n}\rho_{\beta_i}
T^{\beta_i}\left(y_{\beta_i}^j\right)^2-T^{\alpha_1}\sum_{j=1}^{2n}\left(y_{\alpha_1}^j\right)^2\right).
\end{align}
\end{prop}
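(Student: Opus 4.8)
The plan is to localize the integral on the fibre directions over the small neighborhood $V_p$ and compute it as a Gaussian integral with the Berezin (top-degree form) selection picking out the answer. Recall that near $p$ the functions $\rho_{\alpha_1},\dots,\rho_{\alpha_{N_p}}$ are identically $1$, while $\rho_{\beta_1},\dots,\rho_{\beta_{2n}}$ are the cut-off functions $h(r_{\beta_i})$ vanishing at $\partial U_{\beta_i}$; since the $\partial U_{\beta_i}$ meet transversally at $p$, the $r_{\beta_i}$ (hence, up to diffeomorphism, the $\rho_{\beta_i}$) form a local coordinate system on $M$ near $p$. The first step is to restrict to a coordinate ball $V_p$ on which: (i) all the frames $(y^j_{\beta_i})$ are defined and are constant-linear changes of each other; (ii) $(r_{\beta_1},\dots,r_{\beta_{2n}})$ is a genuine chart; and (iii) $\phi=1$ near $p$, $\supp\phi\subset V_p$. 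On $V_p$ the integrand in (\ref{0.1}) involves only the finitely many frames $y_{\beta_i}$ and $y_{\alpha_1}$, all related by fixed linear maps, so the whole expression is a product of a smooth compactly supported horizontal factor and a Gaussian in the vertical fibre variables.

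Next I would carry out the fibre integration $\int_{F_x/M}$ for fixed $x\in V_p$. Writing everything in a single fibre frame (say $y=y_{\alpha_1}$) and letting $L_{\beta_i}$ denote the constant linear map with $y_{\beta_i}=L_{\beta_i}y$, the exponent becomes a negative-definite quadratic form in $y$ as soon as $\rho_{\alpha_1}(x)=1>0$ (which holds near $p$), with coefficient matrix $Q(x)=\Id+\sum_i \rho_{\beta_i}(x)T^{\beta_i}L_{\beta_i}^{t}L_{\beta_i}$; the $2n$-fold wedge product $\prod_i\big(\tfrac{d\rho_{\beta_i}T^{\beta_i}}{4\pi}\sum_j d(y^j_{\beta_i})^2\big)$ supplies exactly the top vertical degree $2n$ needed to integrate over the fibre, producing a Gaussian moment. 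After the standard Gaussian evaluation one is left with a differential form on the base $V_p$ of the shape
\begin{align*}
\nu_p=\lim_{T\to+\infty}\int_{V_p}\phi\,\Big(\tfrac{1}{4\pi}\Big)^{2n}\Big(\prod_{i=1}^{2n}T^{\beta_i}\Big)\,P\big(T,\rho_{\bullet}(x),\ldots\big)\;d\rho_{\beta_1}\wedge\cdots\wedge d\rho_{\beta_{2n}},
\end{align*}
where the only surviving wedge of the $d(y^j_{\beta_i})$'s, after the Berezin integral over the fibre, is the one that uses each $d\rho_{\beta_i}$ exactly once (the pure-vertical $d(y^j_{\beta_i})$ parts being killed in the fibre integration), and $P$ is an explicit rational-plus-Gaussian expression in the entries of $Q(x)^{-1}$ and $\det Q(x)$.

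The final step is the limit $T\to+\infty$. Changing variables to $u_i=\rho_{\beta_i}(x)$ (legitimate by (ii) above, since $(\rho_{\beta_1},\dots,\rho_{\beta_{2n}})$ is a chart near $p$ and $\phi$ is supported there), the base integral becomes an integral over a box $[0,\epsilon)^{2n}$ in the $u$-variables of an explicit function of $u$ and $T$; one then rescales $u_i\mapsto u_i/T^{\beta_i}$ (or, more carefully, $T^{\beta_i}u_i=s_i$) to absorb the prefactor $\prod_i T^{\beta_i}$, turning the integral into $\int_{[0,\infty)^{2n}}$ of a fixed $T$-independent integrand up to an error that vanishes because $\phi\to 1$ and the Gaussian forces concentration near $u=0$ (i.e. near $p$). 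Dominated convergence then yields existence of the limit and an explicit closed-form value for $\nu_p$ as an elementary integral, e.g. a product of one-dimensional integrals of the type $\int_0^\infty \frac{s^{?}}{(1+s)^{?}}\,ds$ weighted by the linear-algebra data $\{L_{\beta_i}\}$.

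The main obstacle I anticipate is bookkeeping in the Berezin/fibre integration: one must verify that among the $(2n)!$-many ways of distributing the $2n$ factors $\sum_j d(y^j_{\beta_i})^2$ over the fibre, only the configurations producing the top form $d\rho_{\beta_1}\wedge\cdots\wedge d\rho_{\beta_{2n}}$ on the base survive (all others having too few fibre $dy$'s to saturate the $2n$-dimensional fibre integral, or too few $dx$'s to give a top form on $V_p$), and to track the sign/orientation coming from the ordering $\beta_1>\cdots>\beta_{2n}$ and the super-trace conventions. Once that selection is pinned down, the condition $p\in{\bf B}_+$, i.e. $\beta_{2n}>\alpha_1$, is exactly what is needed to guarantee the relevant exponents make the $s_i$-integrals converge at $\infty$ (otherwise one of the rescaled integrals would diverge), which is why the proposition is stated for ${\bf B}_+$ rather than all of ${\bf B}$.
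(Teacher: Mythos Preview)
Your rescaling approach has a genuine gap at the final step. After you substitute $s_i = T^{\beta_i-\alpha_1}\rho_{\beta_i}$ and perform the Gaussian fibre integration, the resulting integrand on $(0,\infty)^{2n}$ decays only like $|s|^{-2n}$ at infinity. Indeed, the fibre integral is $\int_{\R^{2n}}\det(A_1 z,\ldots,A_{2n}z)\,e^{-z^t(I+\sum_i s_i A_i)z}\,dz$, and a homogeneity count (substitute $w=(I+\sum s_iA_i)^{1/2}z$) shows this behaves like $(\det\sum s_iA_i)^{-1/2}\cdot(\text{degree }-n\text{ in }s)\sim|s|^{-2n}$ for large $|s|$. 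Over the orthant $(0,\infty)^{2n}$ this gives $\int r^{-2n}\,r^{2n-1}\,dr=\int r^{-1}\,dr$, which diverges logarithmically. Hence your limiting integrand is \emph{not} absolutely integrable, dominated convergence does not apply, and the existence of the limit is not established by this route. (A secondary issue: $(\rho_{\beta_1},\ldots,\rho_{\beta_{2n}})$ is not a chart at $p$ since $h'(0)=0$; one can still change variables on the open orthant, but your parenthetical ``up to diffeomorphism'' is not quite right.)

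The paper's argument is structurally different and avoids this obstacle. Rather than rescaling and passing to a limit, it differentiates the integral in $T$ and exploits the nilpotent operator $d^H-i_Y$ (which satisfies $(d^H-i_Y)^2=0$) together with the identity $(d^H-i_Y)(\rho_\alpha\,dh_\alpha)=d\rho_\alpha\,dh_\alpha-2\rho_\alpha h_\alpha$ to write the entire integrand as $(d^H-i_Y)$ of something. Stokes' theorem then pushes $\partial_T$ of the integral onto a term supported on $\supp(d\phi)$, which lies \emph{away} from $p$. There the estimates of Section~\ref{s2.3} (each factor $d\log(\rho_{\beta_j}+cT^{\alpha_1-\beta_j})$ contributing at worst $O(\log T)$, plus an extra $T^{-1}$ from the remaining factor) yield $\gamma_T:=\partial_T(\cdots)=O\big((\log T)^{2n-1}/T^{2}\big)$, which is integrable on $[1,\infty)$. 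Existence of the limit follows by integrating $\gamma_T$. The supersymmetric integration-by-parts is precisely what converts the borderline $|s|^{-2n}$ behaviour into an honestly convergent estimate; without it, your direct approach cannot close.
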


\begin{rem}\label{t0.4}
The local index  $\nu_p$ in (\ref{0.1}) depends essentially only on the correlations between  $\sum_{j=1}^{2n}(y_{\alpha_1}^j)^2$ and  $\sum_{j=1}^{2n}(y_{\beta_i}^j)^2$, $i=1,\,\cdots,\,2n$. For example, if there exist constants $a_i$ ($i=1,\,\cdots,\,2n$) such that
\begin{align}\label{0.4}
 \sum_{i,\,j=1}^{2n}a_i\left(y_{\beta_i}^j\right)^2=\sum_{j=1}^{2n}\left(y_{\alpha_1}^j\right)^2 
\end{align}
on $\pi^{-1}(p)$, then $\nu_p=0$. This fact might be of help when studying affine manifolds. 
\end{rem}

We can now state our main result as follows.

\begin{thm}\label{t0.2}
The following identity holds,
\begin{align}\label{0.2}
\left\langle e(F),[M]\right\rangle =\sum_{p\in{\bf B}_+}\nu_p. 
\end{align}
\end{thm}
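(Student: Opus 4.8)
The plan is to feed a suitable one-parameter family of Euclidean metrics on $F$ into Corollary \ref{t1.2} and then localize the resulting integral as the parameter tends to infinity.

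For $T>0$ I would take on $F$ the metric $g^F_T=\sum_{\alpha=1}^{N}\rho_\alpha\,T^\alpha\,g^F_\alpha$, where $g^F_\alpha$ is the metric on $F|_{U_\alpha}$ that is standard in the coordinates $(y^i_\alpha)$ of ${\bf R}^{2n}_\alpha$ under (\ref{2.1}); since ${\rm Supp}(\rho_\alpha)\subseteq\overline U_\alpha$, $\rho_\alpha=h(r_\alpha)$ is flat along $\partial U_\alpha$, $\rho_\alpha>0$ on $U_\alpha$, and $\{U_\alpha\}$ covers $M$, this is a genuine smooth Euclidean metric. Keeping the given flat $\nabla^F$ and forming $A_T=\pi^*\nabla^{\Lambda^*(F^*)}+c_T(Z)$ as in (\ref{1.2}), Corollary \ref{t1.2} gives, for \emph{every} $T>0$,
\begin{align}\label{pp1}
\left\langle e(F),[M]\right\rangle=\left(\tfrac{1}{2\pi}\right)^{2n}\int_{\mF}{\rm tr}_s\!\left[\exp\!\left(A_T^2\right)\right].
\end{align}
Because $\nabla^F$ is flat, $(\pi^*\nabla^{\Lambda^*(F^*)})^2=0$; combining this with $A_T^2=\{\pi^*\nabla^{\Lambda^*(F^*)},c_T(Z)\}+c_T(Z)^2$, with (\ref{1.2a}), and with the fact that over each $U_\alpha$ the frame dual to $(y^i_\alpha)$ and each of the bilinear forms $g^F_\beta$ are $\nabla^F$-parallel, one obtains
\begin{align}\label{pp2}
A_T^2=-|Z|^2_{g^F_T}+c_T\!\left(d^V Z\right)+\sum_{\beta=1}^{N}T^\beta\,d\rho_\beta\wedge\bigl(Z^*_\beta\wedge\bigr),
\end{align}
where $Z^*_\beta=g^F_\beta(Z,\cdot)$ and $d^VZ$ is the vertical covariant derivative of the tautological section $Z$; the key point is that $d\rho_1,\dots,d\rho_N$ are the only horizontal one-forms occurring in $A_T^2$.

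Since ${\rm rk}(F)=\dim M=2n$, only the component of ${\rm tr}_s[\exp(A_T^2)]$ of horizontal degree $2n$ (hence of vertical degree $2n$) contributes to (\ref{pp1}), and by (\ref{pp2}) it must carry a factor $d\rho_{\beta_1}\wedge\cdots\wedge d\rho_{\beta_{2n}}$ with the $2n$ differentials linearly independent. After arranging (as one may) that each $d\rho_\alpha$ is supported in a thin collar of $\partial U_\alpha$, this confines the whole component to an arbitrarily small neighbourhood of the finite set ${\bf B}$, hence into $\bigcup_{p\in{\bf B}}W_p$. Choosing cut-offs $\phi_p$ with ${\rm supp}\,\phi_p\subset W_p$ and $\sum_{p}\phi_p\equiv1$ there, (\ref{pp1}) becomes, for all $T>0$,
\begin{align}\label{pp3}
\left\langle e(F),[M]\right\rangle=\left(\tfrac{1}{2\pi}\right)^{2n}\sum_{p\in{\bf B}}\int_{\mF}\phi_p\,{\rm tr}_s\!\left[\exp\!\left(A_T^2\right)\right];
\end{align}
and on each $W_p$, Remark \ref{t2.1} gives $\rho_{\alpha_k}\equiv1$ ($k=1,\dots,N_p$), the only surviving $d\rho$'s are $d\rho_{\beta_1},\dots,d\rho_{\beta_{2n}}$, and the remaining $\rho_\alpha$ vanish, so there $|Z|^2_{g^F_T}=\sum_{k}T^{\alpha_k}|Z|^2_{g^F_{\alpha_k}}+\sum_{i}\rho_{\beta_i}T^{\beta_i}|Z|^2_{g^F_{\beta_i}}$.

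It remains to evaluate each summand of (\ref{pp3}) as $T\to+\infty$. Expanding $\exp(A_T^2)$ on $W_p$, the only monomial with nonvanishing supertrace on $\Lambda^*(F^*)$ has exactly $2n$ exterior multiplications — necessarily the full product of the $Z^*_{\beta_i}\wedge$ together with $\prod_i T^{\beta_i}d\rho_{\beta_i}$ — and exactly $2n$ interior multiplications, coming from the $-i_{d^VZ}$ part of $c_T(d^VZ)$, so that ${\rm tr}_s$ reduces to the canonical pairing $\Lambda^{2n}(F^*)\otimes\Lambda^{2n}(F)\to{\bf R}$; carrying this out produces the vertical top form $d\bigl(|Z|^2_{g^F_{\beta_1}}\bigr)\wedge\cdots\wedge d\bigl(|Z|^2_{g^F_{\beta_{2n}}}\bigr)$ against the weight $\exp(-|Z|^2_{g^F_T})$, with normalizing constants assembling into the $(1/4\pi)^{2n}$ of (\ref{0.1}). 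Among the $\alpha_k$-terms in $|Z|^2_{g^F_T}$ only the leading one $T^{\alpha_1}|Z|^2_{g^F_{\alpha_1}}$ survives the limit (the rest being of lower order after the constant linear change of coordinates relating the $(y^i_{\alpha_k})$, which legitimizes a dominated-convergence passage). Hence, up to an $o(1)$ as $T\to+\infty$, $(1/2\pi)^{2n}\int_{\mF}\phi_p\,{\rm tr}_s[\exp(A_T^2)]$ equals the integral under the limit in Proposition \ref{t0.1}, with $\phi=\phi_p$ shrunk into the $V_p$ there. The substitution $v=T^{\alpha_1/2}y$ in the fibre together with $t_i=\rho_{\beta_i}T^{\,\beta_i-\alpha_1}$ near $\partial U_{\beta_i}$ renders that integrand $T$-independent (the powers of $T$ cancel), over $v\in{\bf R}^{2n}$ and $t_i\in[0,\,h(\varepsilon)T^{\,\beta_i-\alpha_1}]$. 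If $p\in{\bf B}_+$, i.e.\ $\beta_{2n}>\alpha_1$, every $t_i$-range expands to $[0,\infty)$ and Proposition \ref{t0.1} yields the limit $\nu_p$; if $p\notin{\bf B}_+$, some $\beta_i<\alpha_1$, the corresponding $t_i$-range collapses to $\{0\}$, and the integral is $O\bigl(T^{\,\beta_i-\alpha_1}\bigr)\to0$. Letting $T\to+\infty$ in (\ref{pp3}) then gives $\langle e(F),[M]\rangle=\sum_{p\in{\bf B}_+}\nu_p$.

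The main obstacle is this last step: one must carry out the Clifford supertrace with \emph{all} signs and combinatorial factors so that the localized integrand is exactly — not merely up to a scalar — the one in (\ref{0.1}); control the multi-scale Gaussian asymptotics (concentration of $Z$ near $0$ and of $r_{\beta_i}$ near $\partial U_{\beta_i}$) uniformly enough to justify every limit by dominated convergence; and pin down the sharp dichotomy $\nu_p$ versus $0$ according to whether $\beta_{2n}>\alpha_1$, which is exactly where the a priori arbitrary, but now fixed, ordering of the charts $U_\alpha$ becomes essential (cf.\ the footnote near (\ref{2.1})).
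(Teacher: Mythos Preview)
Your plan is broadly the paper's plan: apply Corollary~\ref{t1.2} to the family $g^F_T$, reduce ${\rm tr}_s[\exp(A_T^2)]$ to the scalar expression $\exp(\frac12 d^Hd^V|Y|^2_{g^F_T}-|Y|^2_{g^F_T})$, localize near ${\bf B}$, and pass to $T\to+\infty$. Your shortcut of arranging each $d\rho_\alpha$ to be supported in a thin collar of $\partial U_\alpha$ is legitimate and pleasant: it makes the horizontal top-degree part literally vanish away from ${\bf B}$ for every $T$, so you bypass most of the paper's Proposition~\ref{t3.2}, which instead proves \emph{asymptotic} vanishing for all $p\notin{\bf B}_+$ without that extra constraint on the $\rho_\alpha$'s.

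The genuine gap is in the last step. Your substitution $v=T^{\alpha_1/2}y$, $t_i=\rho_{\beta_i}T^{\beta_i-\alpha_1}$ does render the integrand $T$-independent, but dominated convergence cannot close the argument, because the candidate dominating function on $[0,\infty)^{2n}\times{\bf R}^{2n}$ is not integrable. Indeed, after integrating out the $t_i$ one is left with
\[
\int_{{\bf R}^{2n}}\frac{P(v)}{\prod_{i=1}^{2n} q_i(v)}\,e^{-|v|^2}\,dv,
\qquad q_i(v)=h_{\beta_i}(v),\quad \deg P=2n,\ \deg\!\prod q_i=4n,
\]
whose absolute value behaves like $|v|^{-2n}$ near $v=0$ and diverges logarithmically in dimension $2n$. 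The same obstruction blocks both your ``the lower $\alpha_k$-terms drop out by dominated convergence'' step and your $O(T^{\beta_i-\alpha_1})$ bound for $p\notin{\bf B}_+$: once any $t_j$-range is allowed to expand, there is no integrable majorant.

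The paper circumvents exactly this by never sending the $t_i$-ranges to infinity directly. It uses the nilpotent operator $d^H-i_Y$ (so $(d^H-i_Y)^2=0$) together with $(d^H-i_Y)(\rho_\alpha\,dh_\alpha)=d\rho_\alpha\,dh_\alpha-2\rho_\alpha h_\alpha$ to rewrite each problematic piece as a total $(d^H-i_Y)$-derivative plus a boundary term living on ${\rm supp}\,d\phi$; Stokes then trades the diverging Gaussian weight for a factor that is only $O(\log T)$ (cf.\ (\ref{3.40a})). Concretely: Lemma~\ref{t3.3} handles the removal of the lower $T^{\alpha_k}h_{\alpha_k}$ terms by this transgression (not by domination), and Lemma~\ref{t3.5} proves existence of $\nu_p$ by showing $\partial_T$ of the cut-off integral is $O((\log T)^{2n-1}/T^2)$, hence integrable, again via the same $(d^H-i_Y)$-Stokes trick. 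Likewise Proposition~\ref{t3.2} for $p\notin{\bf B}_+$ rests on the log-type bound (\ref{3.40a}) rather than a naive power bound. If you want to rescue your substitution route, you will need an equivalent integration-by-parts mechanism in the $(t,v)$ variables; straight dominated convergence will not do.
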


Theorem \ref{t0.2} will be proved in Section \ref{s2.5}. 

 \begin{rem}\label{t0.4}
The Milnor example mentioned earlier shows that the index $\nu_p$ can be nonzero. On the other hand, the sum in  the right hand side of (\ref{0.2})   looks mystic. While it should be related to the \v{C}ech cohomology (at least in the case of $F=TM$),  it depends on the ordering of the coordinate charts $U_\alpha$'s. If one changes the ordering, then the set ${\bf B}_+$ changes. This  sounds  interesting  and deserves further study. 
\end{rem}

\subsection{Superconnections and flat vector bundles}\label{s2.2}

In what follows, for clarity, we will decorate elements in  $\pi^*\Lambda^*(F^*)$ 
with a \  $\widehat \cdot$ \ notation.

 Define on $\pi^{-1}(U_\alpha)$ that
\begin{align}\label{3.8}
\widehat Y=\sum_{k=1}^{2n}y_{\alpha}^{k}\widehat{\frac{\partial}{\partial y_{\alpha}^k}} \in \Gamma\left( \pi^*F  \right).
\end{align}
Then   $\widehat Y$ is a well-defined canonical section of $\pi^*F$ over the total space $\mF$ of $F$.

For any $T>0$,
 let $\widehat\eta_T\in \Gamma( \pi^* F^*)$ be defined by  
\begin{align}\label{3.9}
\widehat\eta_T=\sum_{\alpha=1}^N \rho_\alpha T^\alpha
 \sum_k y^k_\alpha\, \widehat{dy^k_\alpha}  .
\end{align}

\begin{rem}\label{t3.1} 
For any $T>0$, if we give $F$ the Euclidean metric defined by 
\begin{align}\label{3.10}
g^{F}_T=\sum_{\alpha=1}^N \rho_\alpha T^\alpha
  \sum_k \left( {dy_\alpha^k}\right)^2  
 ,
\end{align}
then   $\widehat \eta_T$ is the metric dual of $\widehat Y$ (with respect to $\pi^*g^{F}_T$). 
\end{rem}

Set as in (\ref{1.1}) that 
\begin{align}\label{3.11}
{  c}_T\left(\widehat Y\right)=\widehat\eta_T\wedge-i_{\widehat Y}.
\end{align}
Then ${  c}_T(\widehat Y)$ acts on $( \pi^*\Lambda^*(F^*))|_Y$. Moreover, one has
\begin{align}\label{3.12}
|Y|^2_{g^{F}_T}=-{ c}_T\left(\widehat Y\right)^2
=\sum_{\alpha,\,k}\rho_\alpha T^\alpha  
 \left(y^k_\alpha\right)^2  
.
\end{align}

For any $T>0$, let $A_T$ be the superconnection on $\pi^* \Lambda^*(F^*)$ defined by
\begin{align}\label{3.13}
A_T=\pi^*\nabla^{\Lambda^*(F^*)}+c_T\left(\widehat Y\right).
\end{align}

By Corollary \ref{t1.2}, one has
\begin{align}\label{3.14}
\langle e(F),[M]\rangle=\left(\frac{1}{2\pi}\right)^{2n} \int_{\mF}{\rm tr}_s\left[\exp\left(A_T^2\right)\right]. 
\end{align}

We need to compute $\int_{\mF}{\rm tr}_s[\exp(A_T^2)]$, which does not depend on $T>0$.

From    (\ref{3.8}), (\ref{3.9})  and (\ref{3.11})-(\ref{3.13}), one has
\begin{multline}\label{3.15}
A^2_T=\left[ \pi^* \nabla^{\Lambda^*(F^*)},\sum_{\alpha ,\,k}  \rho_\alpha T^\alpha
   y^k_\alpha \widehat{dy^k_\alpha}    -i_{\widehat Y}\right]
-
|Y|^2_{g^{F}_T}
\\
= \sum_{\alpha,\,k}T^\alpha  \left(d\rho_\alpha  y_\alpha^k   \widehat{dy^k_\alpha}+\rho_\alpha   dy_\alpha^k\, \widehat{dy^k_\alpha} \right)  
- \sum_{k }     d y_\alpha^k\otimes i_{\widehat{\frac{\partial}{\partial y_\alpha^k}} }
-|Y|^2_{g^{F}_T}
 .
\end{multline}

Set on each $\pi^{-1}(U_\alpha)$ that
\begin{align}\label{3.16}
\widehat d^V=\sum_k \widehat{dy^k_\alpha} \frac{\partial}{\partial y_\alpha^k}.
\end{align}
Clearly,  $\widehat d^V$ is well-defined over $\mF$.

From (\ref{3.12}), (\ref{3.15}) and (\ref{3.16}), one has
\begin{align}\label{3.18}
A_T^2=  \frac{1}{2}d^H\widehat d^V\left(|Y|_{g^{F}_T}^2\right)  +\sum_{\alpha,\,k }T^\alpha\rho_\alpha  dy_\alpha^k\, \widehat{dy^k_\alpha} 
- \sum_{k }    dy_\alpha^k\otimes i_{\widehat{\frac{\partial}{\partial y_\alpha^k}} }-|Y|^2_{g^{F}_T}
 .
\end{align}

Set 
\begin{align}\label{3.19}
B_T^2=  \frac{1}{2}d^H\widehat d^V\left(|Y|_{g^{F}_T}^2\right)   - \sum_{k }    dy_\alpha^k\otimes i_{\widehat{\frac{\partial}{\partial y_\alpha^k}} }-|Y|^2_{g^{F}_T}
 .
\end{align}

By  (\ref{3.18}), (\ref{3.19}), \cite[Proposition 4.9]{BZ92}  and  a simple degree counting along vertical directions, one sees   that
\begin{align}\label{3.20}
{\rm tr}_s\left[\exp\left(A_T^2\right)\right] = 
{\rm tr}_s\left[\exp\left(B_T^2\right)\right] 
 .
\end{align}

By (\ref{3.16}), (\ref{3.19}) and \cite[Proposition 4.9]{BZ92}, we see that if we exchange $\widehat{dy_\alpha^k}$ and $dy^k_\alpha$, we get the same supertrace of $\exp(B_T^2)$. Thus, we have
\begin{multline}\label{3.24}
{\rm tr}_s\left[\exp\left(B_T^2\right)\right]= 
\left\{
\exp\left(\frac{1}{2}d^Hd^V|Y|^2_{g^{F}_T}-|Y|^2_{g^{F}_T}\right)\right\}^{(4n)}
   {\rm tr}_s\left[\exp\left(-  \sum_k\widehat {dy_\alpha^k} \, i_{\widehat{\frac{\partial}{\partial y_\alpha^k}}}\right)\right]
\\
= 
\left\{\exp\left(\frac{1}{2}d^Hd^V|Y|^2_{g^{F}_T}-|Y|^2_{g^{F}_T}\right)\right\}^{(4n)}
  .
\end{multline}

From (\ref{3.14}), (\ref{3.20}),  (\ref{3.24}) and a simple transgression argument, one gets

\begin{prop}\label{t3.2a}   
For any Euclidean metric $h^F$ on $F$, one has
\begin{align}\label{000}
\langle e(F),[M]\rangle = \left(\frac{1}{2\pi}\right)^{2n}\int_{\mF}\exp\left(\frac{1}{2}d^Hd^V|Y|^2_{h^{F}}-|Y|^2_{h^{F}}\right) . 
\end{align}
\end{prop}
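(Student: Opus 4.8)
\textbf{Plan for the proof of Proposition \ref{t3.2a}.}
The target formula \eqref{000} should be a direct consequence of the chain \eqref{3.14}, \eqref{3.20}, \eqref{3.24}, and a transgression argument, and the plan is to make each of these links precise. First I would fix an arbitrary $T>0$ and start from \eqref{3.14}, which expresses $\langle e(F),[M]\rangle$ as $\left(\frac{1}{2\pi}\right)^{2n}\int_{\mF}{\rm tr}_s[\exp(A_T^2)]$. By \eqref{3.20} and \eqref{3.24} this equals
\begin{align*}
\left(\frac{1}{2\pi}\right)^{2n}\int_{\mF}\left\{\exp\left(\tfrac{1}{2}d^Hd^V|Y|^2_{g^{F}_T}-|Y|^2_{g^{F}_T}\right)\right\}^{(4n)},
\end{align*}
and since $\int_{\mF}$ automatically picks out the top-degree component, the superscript $(4n)$ can be dropped, giving \eqref{000} for the particular family of metrics $g^F_T$ in \eqref{3.10}. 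So the real content is to replace $g^F_T$ by an arbitrary Euclidean metric $h^F$.

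For that I would run a transgression (homotopy) argument in the space of Euclidean metrics on $F$. Let $h^F_s$, $s\in[0,1]$, be a smooth path of Euclidean metrics on $F$ with $h^F_0=g^F_T$ (for some fixed $T$) and $h^F_1=h^F$; such a path exists because the space of Euclidean metrics is convex (take $h^F_s=(1-s)g^F_T+s\,h^F$). Write $f_s=\tfrac12 d^Hd^V|Y|^2_{h^F_s}-|Y|^2_{h^F_s}$ and $\omega_s=\exp(f_s)$. The claim is that $\int_{\mF}\omega_s$ is independent of $s$. The standard mechanism is that $\frac{d}{ds}\omega_s$ is $d$-exact with a primitive of sufficiently rapid vertical decay, so that Stokes' theorem on the total space $\mF$ (which has no boundary over the closed $M$, the only issue being decay at vertical infinity) kills the derivative. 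Concretely, since $|Y|^2_{h^F_s}$ is a fiberwise positive-definite quadratic form that blows up along the fibers, $\exp(-|Y|^2_{h^F_s})$ and all the polynomial-times-Gaussian terms appearing in $\omega_s$ decay exponentially in the vertical directions, uniformly in $s\in[0,1]$; this justifies differentiating under the integral sign and discarding boundary-at-infinity contributions. One then checks, using $(d^H)^2=(d^V)^2=d^Hd^V+d^Vd^H=0$ from \eqref{2.3c} and $d=d^H+d^V$ from \eqref{2.3}, that $d f_s=0$ (so $\omega_s$ is closed) and that $\frac{d}{ds}f_s = d\,\gamma_s$ for an explicit vertical-exponentially-decaying form $\gamma_s$ built from $\frac{d}{ds}|Y|^2_{h^F_s}$ — indeed $\frac{d}{ds}\bigl(\tfrac12 d^Hd^V - 1\bigr)|Y|^2_{h^F_s} = d\bigl(\tfrac12 d^V\tfrac{d}{ds}|Y|^2_{h^F_s}\bigr) - \tfrac{d}{ds}|Y|^2_{h^F_s}$, and one repackages the zero-form piece as $d$ of a one-form by the usual Mathai–Quillen trick of inserting it into the exponential. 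Hence $\frac{d}{ds}\omega_s = \frac{d}{ds}f_s\wedge\omega_s = d(\gamma_s\wedge\omega_s)$ (using closedness of $\omega_s$), and $\frac{d}{ds}\int_{\mF}\omega_s=\int_{\mF}d(\gamma_s\wedge\omega_s)=0$.

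Putting the pieces together: $\int_{\mF}\exp(f_1)=\int_{\mF}\exp(f_0)=\int_{\mF}\exp\bigl(\tfrac12 d^Hd^V|Y|^2_{g^F_T}-|Y|^2_{g^F_T}\bigr)=(2\pi)^{2n}\langle e(F),[M]\rangle$, which is exactly \eqref{000} with $h^F=h^F_1$ arbitrary. I would also remark that \eqref{3.14} and \eqref{3.20}--\eqref{3.24} already establish $T$-independence within the $g^F_T$ family, so the transgression only needs to be carried out once, from a single $g^F_T$ to $h^F$. The step I expect to be the main obstacle — or at least the one requiring the most care — is the decay/Stokes justification: one must verify that the primitive $\gamma_s\wedge\omega_s$ genuinely decays fast enough along the vertical fibers of $\mF$ uniformly in $s$, so that integration by parts on the non-compact total space is legitimate and no contribution escapes to vertical infinity. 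This is where the positivity \eqref{3.8a} of the $\rho_\alpha$ over $U_\alpha$ and the resulting fiberwise positive-definiteness of $|Y|^2_{g^F_T}$ (and of each $|Y|^2_{h^F_s}$) is essential; everything else is bookkeeping with the flat structure \eqref{2.1}--\eqref{2.3c} and the supertrace computation already done in \eqref{3.15}--\eqref{3.24}.
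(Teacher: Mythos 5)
Your route is the same as the paper's: the paper's entire proof of Proposition \ref{t3.2a} is the chain \eqref{3.14}, \eqref{3.20}, \eqref{3.24} followed by ``a simple transgression argument,'' and you have correctly identified that the only real content beyond those displayed formulas is the transgression from the particular metrics $g^F_T$ to an arbitrary Euclidean metric $h^F$ (using convexity of the space of metrics).

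There is, however, a concrete error in your execution of the transgression. Writing $f_s=\tfrac12 d^Hd^V|Y|^2_{h^F_s}-|Y|^2_{h^F_s}$, you claim $df_s=0$, hence that $\omega_s=\exp(f_s)$ is closed, and that $\tfrac{d}{ds}f_s$ is $d$-exact. Neither is true for the de Rham differential: the two-form component $\tfrac12 d^Hd^V|Y|^2_{h^F_s}$ is indeed $d$-closed, but $d\bigl(-|Y|^2_{h^F_s}\bigr)=-d|Y|^2_{h^F_s}\neq 0$, and the zero-form piece $-\tfrac{d}{ds}|Y|^2_{h^F_s}$ of $\tfrac{d}{ds}f_s$ is likewise not $d$-exact. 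The correct operator is the twisted differential $d^H-i_Y$ that the paper introduces in \eqref{3.45}--\eqref{3.45a}: one has $f_s=\left(d^H-i_Y\right)\bigl(\tfrac12 d^V|Y|^2_{h^F_s}\bigr)$ (since $i_Y\bigl(\tfrac12 d^V|Y|^2\bigr)=|Y|^2$), so $\left(d^H-i_Y\right)f_s=0$ by $\left(d^H-i_Y\right)^2=0$, and therefore $\tfrac{d}{ds}\omega_s=\left(d^H-i_Y\right)\bigl(\tfrac12 d^V\tfrac{d}{ds}|Y|^2_{h^F_s}\cdot\omega_s\bigr)$. One then needs one further observation to conclude that $\int_{\mF}\left(d^H-i_Y\right)\mu=0$: the $i_Y$ term contributes nothing because contraction lowers total degree below $4n$, while $\int_{\mF}d^H\mu=\int_{\mF}d\mu-\int_{\mF}d^V\mu$ vanishes by Stokes (using the Gaussian vertical decay you correctly emphasized) together with the vanishing of fiberwise integrals of $d^V$-exact rapidly decaying forms. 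Your parenthetical appeal to ``the usual Mathai--Quillen trick'' is pointing at exactly this repair, but as written the identities $df_s=0$ and $\tfrac{d}{ds}\omega_s=d(\gamma_s\wedge\omega_s)$ are false, and the argument does not close until $d$ is replaced by $d^H-i_Y$ throughout.
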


$\ $

\subsection{The analysis outside of   $  {\bf B}_+$}\label{s2.3}

We continue to  work with $g_T^F$.

\begin{prop}\label{t3.2}   
 For any $p\in M\setminus   {\bf B_+} $, there is an open neighborhood $V_p$ of $p$ in $M$ such that  for any  smooth function $f\in C^\infty(M)$   supported in $V_p$, one has
\begin{align}\label{3.25}
\lim_{T\rightarrow +\infty}\int_{\mF}f\,  \exp\left(\frac{1}{2}d^Hd^V|Y|^2_{g^{F}_T}-|Y|^2_{g^{F}_T}\right)=0
 .
\end{align}
\end{prop}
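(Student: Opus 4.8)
The plan is to analyze the integrand $\exp(\frac12 d^Hd^V|Y|^2_{g^F_T}-|Y|^2_{g^F_T})$ fiberwise and show that, after suitable localization, the limit vanishes for degree reasons. First I would expand $|Y|^2_{g^F_T}=\sum_{\alpha,k}\rho_\alpha T^\alpha(y_\alpha^k)^2$, so that
\begin{align*}
\tfrac12 d^Hd^V|Y|^2_{g^F_T}=\sum_{\alpha,k}T^\alpha\bigl(d\rho_\alpha\, y_\alpha^k\, d y_\alpha^k+\rho_\alpha\,(d y_\alpha^k)^2\bigr),
\end{align*}
(using that the $(y_\beta^i)$ transform constant-linearly, the cross-term sits inside a single chart). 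The vertical-degree part $\{\,\cdot\,\}^{(4n)}$ in \eqref{3.24} requires us to saturate $4n$ fiber-form degree, i.e. pick up exactly $2n$ factors of the form $d y_\alpha^k$ and $2n$ of $\widehat{d y_\alpha^k}$ — equivalently, after the exchange argument already carried out, a top form in the $dy$'s on the fiber combined with the Gaussian. The key point is that the horizontal part $d\rho_\alpha$ only contributes near $\partial U_\alpha$, and $\rho_\alpha(dy_\alpha^k)^2$ contributes on all of $U_\alpha$; to get a nonzero fiber-top form one must use a term producing $dy^1\wedge\cdots\wedge dy^{2n}$ in some single chart's coordinates, but those are pinned to one $\alpha$, and the only chart whose Gaussian weight survives the $T\to\infty$ limit at $p$ is the one with the \emph{largest index} among those active at $p$.

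Next I would do the localization: given $p\notin{\bf B}_+$, use Remark \ref{t2.1} to find the finite list of charts $U_{\alpha_1},\dots,U_{\alpha_{N_p}}$ containing $p$ (with $\rho_{\alpha_j}\equiv 1$ near $p$) and the charts $U_{\beta_1},\dots,U_{\beta_{2n}}$ whose boundaries pass through $p$; choose $V_p\subset W_p$ small enough that no other $\rho_\alpha$ is nonzero on $V_p$, and also that $d\rho_{\beta_i}$ has the expected one-dimensional (normal-geodesic) behavior. Over $V_p$ the integrand is then governed entirely by the quadratic form $Q_T=\sum_{j}T^{\alpha_1}(y_{\alpha_1}^j)^2+\sum_{i,j}\rho_{\beta_i}T^{\beta_i}(y_{\beta_i}^j)^2$ (the $\alpha_j$, $j\ge2$, are dominated by $\alpha_1$ since $\alpha_1>\alpha_j$, so their Gaussian factors are $\ge$ and can be absorbed / bounded). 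The Gaussian integral in the fiber then rescales: substituting $y\mapsto y/\sqrt{T^{m}}$ with $m=\min$ of the exponents controls everything, and the condition $p\notin{\bf B}_+$, namely $\beta_{2n}\le\alpha_1$ (so $\alpha_1$ is the strictly largest active exponent, or at least not strictly smaller than every $\beta_i$), forces the leading chart for the fiber-top form and the leading chart for the Gaussian to be incompatible — the $4n$-form component picks out $d\rho_{\beta_i}$-factors from all $2n$ boundary charts (each $\beta_i$ contributes one $d\rho_{\beta_i}$, giving a transverse product spanning the base near $p$), but then the surviving fiber Gaussian has weight $T^{\alpha_1}$ which, being $\ge$ the $T^{\beta_i}$, is too large: rescaling by $T^{-\alpha_1/2}$ in all fiber variables makes the $d\rho_{\beta_i}T^{\beta_i}$ prefactors collapse to zero as $T\to\infty$. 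More precisely the integral is bounded by $C\,T^{\sum_i(\beta_i-\alpha_1)/2}\cdot T^{(\text{something})}$ with total exponent strictly negative exactly when we are \emph{not} in ${\bf B}_+$.

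The main obstacle I anticipate is the careful bookkeeping of exponents of $T$ in the rescaled integral: one must show that the power of $T$ produced by the $2n$ horizontal factors $d\rho_{\beta_i}T^{\beta_i}$, combined with the Jacobian $T^{-\,(\text{sum over fiber variables})/2}$ from the Gaussian rescaling and the constraint that $4n$ vertical degrees be saturated, is strictly negative precisely under the hypothesis $\beta_{2n}\le\alpha_1$ defining the complement of ${\bf B}_+$, and is $\ge 0$ (borderline) on ${\bf B}_+$ — this is the dichotomy that makes Proposition \ref{t0.1} give a finite nonzero $\nu_p$ on ${\bf B}_+$ and forces vanishing here. A secondary technical point is handling the region of $V_p$ away from the boundaries $\partial U_{\beta_i}$ (where some $d\rho_{\beta_i}=0$) and the region near $p$ itself: there the $4n$-degree requirement in \eqref{3.24} simply cannot be met with fewer than $2n$ distinct $d\rho$-factors once $\rho_{\alpha_j}\equiv 1$ (so $d\rho_{\alpha_j}=0$) near $p$, so the integrand vanishes identically there, reducing everything to the transverse corner analysis above. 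Finally, $|{\bf B}|<\infty$ and compactness of $\overline V_p$ justify passing the limit inside the integral via dominated convergence, using the exponential decay of $\exp(A_T^2)$ along the fibers noted after \eqref{1.2}.
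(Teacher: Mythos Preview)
Your overall picture is right, but the central power-counting step has a genuine gap. You claim that rescaling the fiber by $T^{-\alpha_1/2}$ leaves a total $T$-exponent that is ``strictly negative exactly when we are not in ${\bf B}_+$.'' This is false as stated: the defining condition for $p\in{\bf B}\setminus{\bf B}_+$ is only that the \emph{smallest} boundary index satisfies $\beta_{2n}<\alpha_1$, which does not force $\sum_i\beta_i<2n\alpha_1$ (take $\beta_1\gg\alpha_1>\beta_{2n}$). After your rescaling the remaining fiber integral is not a constant but still depends on the base point through the coefficients $\rho_{\beta_i}(x)\,T^{\beta_i-\alpha_1}$, which vanish on $\partial U_{\beta_i}$ and blow up in the interior when $\beta_i>\alpha_1$; you have not estimated the resulting base integral. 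The paper does not rescale by a single power. Instead it bounds the fiber Gaussian integral by $C\bigl(\sum_j\rho_{\beta_{i_j}}T^{\beta_{i_j}}+\rho_{\alpha_1}(p)T^{\alpha_1}\bigr)^{-2n}$ and then integrates each factor $d\rho_{\beta_i}T^{\beta_i}$ against this in the normal coordinate $r_{\beta_i}$, obtaining $d\log\bigl(\rho_{\beta_i}+\rho_{\alpha_1}(p)T^{\alpha_1-\beta_i}\bigr)$, whose $r_{\beta_i}$-integral is $O(\log T)$ when $\beta_i>\alpha_1$ and $O(1/T)$ when $\beta_i<\alpha_1$. It is this \emph{multiplicative} structure---a single index $\beta_i<\alpha_1$ contributes a $1/T$ that kills all the $\log T$ factors---that gives the vanishing, not an additive exponent count.

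Two further gaps. First, $M\setminus{\bf B}_+$ contains all points lying on fewer than $2n$ boundaries, and Remark~\ref{t2.1} (in particular $\rho_{\alpha_j}\equiv 1$, hence $d\rho_{\alpha_j}=0$) is asserted only for $p\in{\bf B}$; for general $p$ the $d\rho_{\alpha_{i_j}}$ do contribute to the $4n$-degree part and must be estimated (the paper does this via the same mechanism plus the elementary bound $\prod_{j=1}^{2n-k}T^{\alpha_{i_j}-\alpha_1}\le T^{-1}$ when $k\le 2n-2$). Second, the borderline case $M_p=2n-1$ with $\beta_1>\cdots>\beta_{2n-1}>\alpha_1$ and the last factor equal to $d\rho_{\alpha_1}T^{\alpha_1}dh_{\alpha_1}$ is \emph{not} handled by the logarithmic bound (the count comes out exactly zero, giving $O((\log T)^{2n-1})$); the paper disposes of it by a vertical transgression identity, rewriting the integrand as $d^V(\cdots)$ plus terms with $\alpha_{i}<\alpha_1$. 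Your proposal does not anticipate this step. (A minor slip: $\tfrac12 d^Hd^V|Y|^2_{g^F_T}=\sum_{\alpha,k}T^\alpha d\rho_\alpha\,y_\alpha^k\,dy_\alpha^k$; there is no $\rho_\alpha(dy_\alpha^k)^2$ term, since $d^H$ hits only $\rho_\alpha$.)
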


\begin{proof}  
By (\ref{2.3}) and  (\ref{3.12}), one has  
\begin{align}\label{3.26}
 \frac{1}{2}d^Hd^V|Y|^2_{g^{F}_T}=\sum_{\alpha,\,k}  d\rho_\alpha  T^\alpha y_\alpha^kdy_\alpha^k
 .
\end{align}
Thus, one has
\begin{align}\label{3.27}
  \exp\left(\frac{1}{2}d^Hd^V|Y|^2_{g^{F}_T}-|Y|^2_{g^{F}_T}\right)
=
 \prod_\alpha\left(1+d\rho_\alpha  T^\alpha \sum_{k}y_\alpha^kdy_\alpha^k \right)
\exp\left( -|Y|^2_{g^{F}_T}\right)
 .
\end{align}

For simplicity, we denote
\begin{align}\label{3.27b}
h_\alpha= \sum_{k}\left(y_\alpha^k\right)^2 
.
\end{align}

Take $p\in M$.  We assume that among $\{U_\alpha\}_{\alpha=1}^N$, there are exactly $N_p$ elements $U_{{\alpha_i}}$, with $\alpha_1>\cdots>\alpha_{N_p}$,   containing $p$.

If $p$ does not lie on any boundary of $U_\alpha$'s, then from (\ref{3.27}) and (\ref{3.27b}), one has
\begin{align}\label{3.28}
 \left\{ \exp\left(\frac{1}{2}d^Hd^V|Y|^2_{g^{F}_T}-|Y|^2_{g^{F}_T}\right)\right\}^{(4n)}=\sum_{\{\alpha_{i_j}\}}\prod_{j=1}^{2n} \left(d\rho_{\alpha_{i_j}} T^{\alpha_{i_j}}\frac{dh_{\alpha_{i_j}}}{2}\right)  \exp\left( -|Y|^2_{g^{F}_T}\right)
\end{align}
near $\pi^{-1}(p)$, 
where $\alpha_{i_j}$ runs through $\alpha_i$, $1\leq i\leq N_p$.  Each $\alpha_{i_j}$  appears at most once in a product. Moreover, 
by (\ref{3.8a}) and  (\ref{3.12}),   one has near $\pi^{-1}(p)$ that
\begin{align}\label{3.30}
 |Y|^2_{g^{F}_T}  
\geq
\frac{1}{2}\,\rho_{\alpha_1}(p)\,T^{\alpha_1}\sum_{k}\left(y_{\alpha_{1}}^k\right)^2 ,
\end{align}
where $\rho_{\alpha_1}(p)>0$. From (\ref{3.30}), one sees that  there is an open neighborhood $V_p$ of $p\in M$ such that for any $f\in C^\infty(M)$ supported in $V_p$,  when $T>>0$,
\begin{align}\label{3.31}
\left|
\int_{\mF}f\,
\prod_{j=1}^{2n} \left(d\rho_{\alpha_{i_j}} T^{\alpha_{i_j}}{dh_{\alpha_{i_j}}}{}\right)  \exp\left( -|Y|^2_{g^{F}_T}\right) \right|
=O\left(  \prod_{j=1}^{2n}T^{\alpha_{i_j}-\alpha_1}\right)
=O\left(\frac{1}{T^{n(2n-1)}}\right).
\end{align}

Formula  (\ref{3.25}) follows from   (\ref{3.28}) and  (\ref{3.31}).


We now assume that  $p$ lies on the boundaries of  some $U_\alpha$'s.
To be more precise, we assume that $p$ lies on the boundaries of $\{U_{\beta_i}\}_{i=1}^{M_p}$ with $\beta_1>\beta_2>\cdots>\beta_{M_p}$.  Then by    (\ref{3.27}),  the terms we need to consider, near $\pi^{-1}(p)$, are of the form
\begin{align}\label{3.32}
\prod_{h=1}^{k} \left(d\rho_{\beta_{i_h}} T^{\beta_{i_h}}dh_{\beta_{i_h}}\right)  
\prod_{j=1}^{2n-k} \left(d\rho_{\alpha_{i_j}} T^{\alpha_{i_j}}dh_{\alpha_{i_j}}\right)  \exp\left( -|Y|_{g^{F}_T}^2  \right).
\end{align}

For simplicity, we assume that $\beta_{i_1}>\cdots>\beta_{i_k}$ and $\alpha_{i_1}>\dots>\alpha_{i_{2n-k}}$.

By (\ref{3.12}), there exists  constant $c_1>0$ such that  
 one has, near $\pi^{-1}(p)$,
\begin{align}\label{3.36}  
|Y|_{g^{F}_T}^2\geq
c_1\left( 
\sum_{j=1}^{k}\rho_{\beta_{i_j}}T^{\beta_{i_j}} +\rho_{\alpha_1}(p)\,T^{\alpha_1} \right)\sum_{l}\left(y_{\alpha_{1}}^l\right)^2 .
\end{align}

From (\ref{3.36}), one sees that there exists constant $C_1>0$ such that when $T>>0$, the following formula holds near $p\in M$,
\begin{multline}\label{3.37}
 \left|\int_{\mF/M}
\prod_{h=1}^{k} \left(d\rho_{\beta_{i_h}} T^{\beta_{i_h}}dh_{\beta_{i_h}}\right)  
\prod_{j=1}^{2n-k} \left(d\rho_{\alpha_{i_j}} T^{\alpha_{i_j}}dh_{\alpha_{i_j}}\right)  \exp\left( -|Y|_{g^{F}_T}^2  \right)\right|
\\
\leq C_1  \left|\frac{\prod_{h=1}^k\left(d\rho_{\beta_{i_h}}T^{\beta_{i_h} }\right)\prod_{j=1}^{2n-k}\left(d\rho_{\alpha_{i_j}}T^{\alpha_{i_j} }\right)}
{\left( 
\sum_{j=1}^{k}\rho_{\beta_{i_j}}T^{\beta_{i_j} } +\rho_{\alpha_1}(p)T^{\alpha_1 } \right)^{2n}} \right|,
\end{multline}
where for a form $Fd{\rm vol}$, we use the notation $|Fd{\rm vol}|=|F|d{\rm vol}$, and $|Fd{\rm vol}| \leq |Gd{\rm vol}|$ means $|F|\leq |G|$. 

Recall that the boundaries of $U_{\beta_{i_h}}$'s intersect transverally to each other.

Since the function $h$ used in  the definition of $\rho_\alpha$'s in  (\ref{3.8c})  is 
increasing, one has
\begin{multline}\label{3.40a}
  \left|\frac{\prod_{h=1}^k\left(d\rho_{\beta_{i_h}}T^{\beta_{i_h} }\right) \prod_{j=1}^{2n-k}\left(d\rho_{\alpha_{i_j}}T^{\alpha_{i_j} }\right)}
{\left( 
\sum_{j=1}^{k}\rho_{\beta_{i_j}}T^{\beta_{i_j} } +\rho_{\alpha_1}(p)T^{\alpha_1 } \right)^{2n}} \right|
\leq 
 \left|\prod_{h=1}^k\frac{ d\rho_{\beta_{i_h}}T^{\beta_{i_h} } }
 {\rho_{\beta_{i_h}}T^{\beta_{i_h} } +\rho_{\alpha_1}(p)T^{\alpha_1 }  } \cdot 
\prod_{j=1}^{2n-k}\frac{ d\rho_{\alpha_{i_j}}T^{\alpha_{i_j} } }
{
\rho_{\alpha_1}(p)T^{\alpha_1 }  } \right|
\\
= \left|\left(\prod_{h=1}^k  d \log\left(  
 \rho_{\beta_{i_h}}  +\rho_{\alpha_1}(p)T^{\alpha_1-\beta_{i_h} } \right)\right)   \cdot 
\prod_{j=1}^{2n-k}\frac{ d\rho_{\alpha_{i_j}}T^{\alpha_{i_j} } }
{
\rho_{\alpha_1}(p)T^{\alpha_1 }  } \right|
 .
\end{multline}

It is easy to see  that for $a>0$ sufficiently small and $T>>0$, the  integration (for each $1\leq h\leq k$) along $0\leq r_{\beta_{i_h}}\leq a$   of    $d \log(  
 \rho_{\beta_{i_h}}  +\rho_{\alpha_1}(p)T^{\alpha_1 -\beta_{i_h}}  )$ is of $O(\log T)$ (resp. $O(\frac{1}{T})$)   if $\beta_{i_h}>\alpha_1$ (resp. $\beta_{i_h}<\alpha_1$). Also, if $k\leq 2n-2$, then one has that, in view of (\ref{3.31}),    when $T\geq 1$,
\begin{align}\label{3.40b}  
 \prod_{j=1}^{2n-k}\frac{  T^{\alpha_{i_j} } }
{
 T^{\alpha_1 }  } \leq \frac{1}{T} . 
\end{align}

From  (\ref{3.37})-(\ref{3.40b}), one finds that if $\alpha_1>\beta_{i_k}$ or if $k\leq 2n-2$, then there exists a sufficiently small open neighborhood $V_p$ of $p\in M$ such that for any smooth function $f\in C^\infty(M)$ supported in $V_p$, 
\begin{align}\label{3.41}  
 \lim_{T\rightarrow +\infty}\int_{\mF}f\,
\prod_{h=1}^{k} \left(d\rho_{\beta_{i_h}} T^{\beta_{i_h}}dh_{\beta_{i_h}}\right)  
\prod_{j=1}^{2n-k} \left(d\rho_{\alpha_{i_j}} T^{\alpha_{i_j}}dh_{\alpha_{i_j}}\right) 
\exp\left( -|Y|_{g^{F}_T}^2  \right) 
=0.
\end{align}

We need only to consider the case of $k=M_p=2n-1$ with $\beta_{2n-1}>\alpha_1$, and the case of $M_p=2n$. 
For the case of $k=M_p=2n-1$ and $\beta_{2n-1}>\alpha_1$ in (\ref{3.32}),  if $\alpha_{i_1}<\alpha_1$, then by (\ref{3.40a}) one still gets (\ref{3.41}). Thus, we need only to deal with the term 
\begin{multline}\label{3.42}
\prod_{h=1}^{2n-1} \left(d\rho_{\beta_{h}} T^{\beta_{h}}dh_{\beta_{h}}\right)  
 \left(d\rho_{\alpha_{1}} T^{\alpha_{1}}dh_{\alpha_{1}}\right)  \exp\left( -|Y|_{g^{F}_T}^2  \right)
\\
=d^V\left(\prod_{h=1}^{2n-1} \left(d\rho_{\beta_{h}} T^{\beta_{h}}dh_{\beta_{h}}\right)  
 \rho^{-1}_{\alpha_1}d\rho_{\alpha_{1}}  \exp\left( -|Y|_{g^{F}_T}^2  \right)\right)
\\
- \prod_{h=1}^{2n-1} \left(d\rho_{\beta_{h}} T^{\beta_{h}}dh_{\beta_{h}}\right)  
 \rho^{-1}_{\alpha_1}d\rho_{\alpha_{1}} \left(\sum_{i=2}^{N_p}  \rho_{\alpha_{i}} T^{\alpha_{i}}dh_{\alpha_{i}}\right)\exp\left( -|Y|_{g^{F}_T}^2  \right)
,
\end{multline}
from which and the Stokes formula, we still get   (\ref{3.41}) via (\ref{3.40a}). 

For the case of $M_p=2n$,   by Remark \ref{t2.1} one has $\rho_{\alpha_j}=1$ ($j=1,\,...,\,N_p$) near $p\in M$. Thus one may   assume $k=2n$.  Since  $\beta_{2n}(p)<\alpha_1(p)$, (\ref{3.41}) follows from (\ref{3.40a}).

Thus (\ref{3.41}), which implies (\ref{3.25}),  holds for $p\notin{\bf B}_+$. 
\end{proof}

\subsection{Proof of Proposition \ref{t0.1}}\label{s2.4}

We now suppose $p\in {\bf B}_+$.  
Recall that    $p$ is an intersection point of $\{\partial U_{ \beta_i}\}_{i=1}^{2n}$ with $  \beta_1>\cdots > \beta_{2n}$, and that $p$ lies in the open coordinate charts $\{U_{ \alpha_j}\}_{j=1}^{N_p}$ with $ \alpha_1>\cdots> \alpha_{N_p}$. Moreover, $ \beta_{2n}> \alpha_1$. 

For brevity, we set the notation on $\pi^{-1}(W_p)$ that
\begin{align}\label{3.47}
  |Y|_T^2= \sum_{i=1}^{2n}\rho_{\beta_{i}}T^{\beta_i}h_{\beta_i} + T^{\alpha_1}h_{\alpha_1}.  
\end{align}

Let $0\leq \phi\leq 1$ be a smooth function on $M$ such that ${\rm Supp}(\phi)\subset V_p$ where $V_p\subset   W_p$ is a sufficiently small open neighborhood of $p$, and that $\phi$ equals to $1$ near  $p\in M$.   
We need only to prove that the  following limit exists,  
\begin{align}\label{3.43}
\lim_{T\rightarrow+\infty}\int_{\mF}\phi\prod_{j=1}^{2n} \left(d\rho_{\beta_{j}} T^{\beta_{j}}dh_{\beta_{j}}\right)  
  \exp\left( -|Y|_{T}^2  \right)
.
\end{align}

For any $T>0$, set 
\begin{align}\label{3.50}
\gamma_T=\frac{\partial }{\partial T} \int_{\mF}\phi \prod_{j=1}^{2n} \left(d\rho_{\beta_{j}} T^{\beta_{j}}dh_{\beta_{j}}\right)  
  \exp\left( - |Y|_T^2  \right).
\end{align}

\begin{lemma}\label{t3.5}
When $T>0$ is very large, one has
\begin{align}\label{3.51}
\gamma_T=O\left(\frac{(\log T)^{2n-1}}{T^2}\right).
\end{align}
\end{lemma}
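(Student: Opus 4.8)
\textbf{Proof proposal for Lemma \ref{t3.5}.}

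The plan is to differentiate the integral \eqref{3.50} in $T$ under the integral sign and then estimate the resulting terms using the same transversality and monotonicity ideas that drove the proof of Proposition \ref{t3.2}. Writing $|Y|_T^2=\sum_{i=1}^{2n}\rho_{\beta_i}T^{\beta_i}h_{\beta_i}+T^{\alpha_1}h_{\alpha_1}$, differentiation in $T$ produces two kinds of contributions: (i) terms where $\partial/\partial T$ hits one of the prefactors $d\rho_{\beta_j}T^{\beta_j}dh_{\beta_j}$, lowering a power $T^{\beta_j}$ to $\beta_j T^{\beta_j-1}$; and (ii) terms where $\partial/\partial T$ hits the exponential, bringing down a factor $-\sum_i\rho_{\beta_i}\beta_i T^{\beta_i-1}h_{\beta_i}-\alpha_1 T^{\alpha_1-1}h_{\alpha_1}$. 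In both cases we gain an overall $T^{-1}$ relative to the integrand of \eqref{3.43}, so the crux is to show that the integral in \eqref{3.43} is itself $O((\log T)^{2n-1})$; combined with the extra $T^{-1}$ this is not quite enough (it would give $O((\log T)^{2n-1}/T)$, even better than claimed), so in fact I expect the bound \eqref{3.51} to follow with room to spare, the stated $T^{-2}$ being a convenient weaker form, OR the $T^{-2}$ comes from the fact that in case (ii) the extra $h_{\beta_i}$ or $h_{\alpha_1}$ factor, when integrated against the Gaussian $\exp(-|Y|_T^2)$, contributes an additional inverse power of the relevant coefficient.

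Concretely, I would proceed as follows. First, perform the fiber integration $\int_{\mF/M}$ explicitly: after the change of variables diagonalizing the quadratic form (as in Remark \ref{t0.4}), the Gaussian integral over $\R^{2n}$ of $\prod_j dh_{\beta_j}$ times $\exp(-|Y|_T^2)$ reduces to elementary Gaussian moments, producing a function on $M$ of the form (product of the $d\rho_{\beta_j}T^{\beta_j}$) divided by a power of $\big(\sum_i\rho_{\beta_i}T^{\beta_i}+T^{\alpha_1}\big)$, exactly as in \eqref{3.37}. Second, bound this as in \eqref{3.40a}: since $h(t)=\exp(-1/t^2)$ is increasing near $\partial U_{\beta_j}$ and $r_{\beta_1},\dots,r_{\beta_{2n}}$ form a transversal coordinate system near $p$, each factor $d\rho_{\beta_j}T^{\beta_j}/(\rho_{\beta_j}T^{\beta_j}+T^{\alpha_1})=d\log(\rho_{\beta_j}+T^{\alpha_1-\beta_j})$ integrates in $r_{\beta_j}$ over a small interval to something of size $O(\log T)$ precisely because $\beta_j>\alpha_1$ for all $j$ (this is where $p\in\mathbf{B}_+$ enters). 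With $2n$ such factors one gets $O((\log T)^{2n})$ for \eqref{3.43}; I would then track one factor of $T^{-1}$ through the above differentiation more carefully, and one factor of $\log T$ is saved because the differentiated prefactor $T^{\beta_j-1}$ kills one of the logarithmic integrations, yielding $(\log T)^{2n-1}$.

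The main obstacle I anticipate is bookkeeping in case (ii): when $\partial/\partial T$ hits the exponential, the extra polynomial factor $\beta_i T^{\beta_i-1}h_{\beta_i}$ (or $\alpha_1 T^{\alpha_1-1}h_{\alpha_1}$) must be carried through the fiber Gaussian integration and the subsequent $d\log$-estimate without spoiling the count. One must check that multiplying by $h_{\beta_i}$ before the Gaussian integration raises the power of $\big(\sum\rho_{\beta_i}T^{\beta_i}+T^{\alpha_1}\big)$ in the denominator by one, so that the net effect of case (ii) is again a gain of $T^{-1}$ (from the $T^{\beta_i-1}$ or $T^{\alpha_1-1}$) together with no loss in the logarithmic count, and in fact the $h_{\alpha_1}$ term — coming with coefficient $T^{\alpha_1}$, which is the smallest among the $T^{\beta_i}$, $T^{\alpha_1}$ — is comfortably subleading. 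Assembling the finitely many terms of types (i) and (ii), each $O((\log T)^{2n-1}/T^2)$ or smaller, gives \eqref{3.51}.
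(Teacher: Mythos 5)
There is a genuine gap, and it sits exactly where you hesitate. Differentiating under the integral sign and estimating the resulting terms one by one can only ever produce one power of $T^{-1}$: each term of type (i) or (ii) is $T^{-1}$ times an integral of the same general shape as (\ref{3.43}), and the only a priori bound available for such integrals near $p$ (where $\phi=1$ and the $\rho_{\beta_i}$ degenerate) is the $d\log$ estimate of (\ref{3.40a}), giving $O((\log T)^{2n})$. So your route yields at best $\gamma_T=O((\log T)^{2n}/T)$. You then misread the direction of the comparison: $O((\log T)^{2n-1}/T)$ is \emph{weaker} than the claimed $O((\log T)^{2n-1}/T^2)$, not "better than claimed," and the $T^{-2}$ is not "a convenient weaker form" --- it is essential, because the lemma is used in (\ref{3.55}) to show that $\int_1^\infty\gamma_t\,dt$ converges, and $\int_1^\infty(\log t)^{k}t^{-1}\,dt$ diverges for every $k\ge 0$. (You also cannot shortcut this by saying the integral (\ref{3.43}) is $O(1)$ because its limit exists: that limit's existence is Proposition \ref{t0.1}, which is what Lemma \ref{t3.5} is proving.)

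The missing idea is a cancellation among your type (i) and type (ii) terms, implemented in the paper by the nilpotent operator $d^H-i_Y$. Since $(d^H-i_Y)(\rho_\alpha\,dh_\alpha)=d\rho_\alpha\,dh_\alpha-2\rho_\alpha h_\alpha$ (see (\ref{3.45a})), the whole integrand of (\ref{3.50}) is the top-degree part of $\phi\prod_i e^{(d^H-i_Y)(\rho_{\beta_i}T^{\beta_i}dh_{\beta_i}/2)}\,e^{(d^H-i_Y)(T^{\alpha_1}dh_{\alpha_1}/2)}$, so its $T$-derivative is $T^{-1}$ times a $(d^H-i_Y)$-exact form plus a term carrying $d\phi$ (this is (\ref{3.52})). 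The exact part integrates to zero (Stokes for $d^H$, vertical degree counting for $i_Y$), so $\gamma_T$ is $T^{-1}$ times an integral supported on $\mathrm{Supp}(d\phi)$, away from $p$. There the integrand contains $T^{\alpha_1}dh_{\alpha_1}$ together with only $2n-1$ factors $d\rho_{\beta_j}T^{\beta_j}dh_{\beta_j}$; since on $\mathrm{Supp}(d\phi)$ either some $\rho_{\beta_i}>0$ (and $\beta_i>\alpha_1$ forces $T^{\alpha_1}/(\rho_{\beta_i}T^{\beta_i}+\cdots)\lesssim T^{-1}$, which is the second power of $T^{-1}$, while the $2n-1$ remaining $d\log$ factors give $(\log T)^{2n-1}$), or some $\rho_{\beta_i}$ vanishes identically nearby (and a vertical transgression kills the term). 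Your second paragraph's Gaussian-moment and $d\log$ machinery is the right tool for estimating \emph{these boundary terms}, but applied to the raw derivative near $p$ it cannot close the gap; without the integration by parts there is nothing that produces the second factor of $T^{-1}$.
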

\begin{proof}
Set  on any $\pi^{-1}(U_\alpha)$ that $i_Y=\sum_{k}y_\alpha^k\,i_{\frac{\partial}{\partial y_\alpha^k}}$, which does not depend on $\alpha$. 
 Then one has
\begin{align}\label{3.45}
  \left(d^H- i_Y\right)^2=0
.
\end{align}

For any $1\leq \alpha\leq N$, one verifies   that
\begin{align}\label{3.45a}
  \left(d^H- i_Y\right)\left(\rho_\alpha dh_\alpha\right)=d\rho_\alpha dh_\alpha-2\,\rho_\alpha h_\alpha.  
\end{align}

By (\ref{3.47}),   (\ref{3.50}),  (\ref{3.45}),  (\ref{3.45a}) and the Stokes formula, one has
\begin{multline}\label{3.52}
 \gamma_T =2^{2n} \frac{\partial }{\partial T}
\int_{\mF}  \phi \prod_{i=1}^{2n} \left(e^{\left(d^H-i_Y\right) \left(\rho_{\beta_{i}}T^{\beta_i}dh_{\beta_i}/2\right) }\right)
e^{\left(d^H-i_Y\right) T^{\alpha_1}dh_{\alpha_1} /2  }   
\\
=   \frac{2^{2n-1}}{T}
\int_{\mF}  \phi \left(d^H-i_Y\right)\left(\left(\sum_{i=1}^{2n}\beta_i\rho_{\beta_{i}}T^{\beta_i}dh_{\beta_i} +\alpha_1T^{\alpha_1}dh_{\alpha_1}\right)\right.
\\
\left.
\cdot \left(
\prod_{i=1}^{2n} e^{\left(d^H-i_Y\right) \left(\rho_{\beta_{i}}T^{\beta_i}dh_{\beta_i}/2\right) }\right)
e^{\left(d^H-i_Y\right) T^{\alpha_1}dh_{\alpha_1} /2  }  \right)
\\
= - \frac{1}{T}
\int_{\mF} d \phi  \left(\sum_{i=1}^{2n}\beta_i\rho_{\beta_{i}}T^{\beta_i}dh_{\beta_i}
\prod_{j\neq i}   \left(d\rho_{\beta_{j}}T^{\beta_j}dh_{\beta_j} \right)\right)  \exp\left( - |Y|_T^2  \right)
\\
-   \frac{\alpha_1}{T}
\int_{\mF} d \phi\, T^{\alpha_1}dh_{\alpha_1}\left(\sum_{i=1}^{2n}
\prod_{j\neq i}   \left(d\rho_{\beta_{j}}T^{\beta_j}dh_{\beta_j}\right) \right) 
 \exp\left( - |Y|_T^2  \right)
\\
= \frac{1}{T}
\int_{\mF} d \phi \, T^{\alpha_1}dh_{\alpha_1} \left(\sum_{i=1}^{2n}\beta_i 
\prod_{j\neq i}   \left(d\rho_{\beta_{j}}T^{\beta_j}dh_{\beta_j} \right)\right)  \exp\left( - |Y|_T^2  \right)
\\
-   \frac{\alpha_1}{T}
\int_{\mF} d \phi\, T^{\alpha_1}dh_{\alpha_1}\left(\sum_{i=1}^{2n}
\prod_{j\neq i}   \left(d\rho_{\beta_{j}}T^{\beta_j}dh_{\beta_j}\right) \right) 
 \exp\left( - |Y|_T^2  \right)
,
\end{multline}
where the last equality follows from a vertical transgression argument (cf. (\ref{3.42})).

For any $q\in {\rm Supp}(d\phi)$, either one of $\rho_{\beta_i}(q)> 0$, or one of $\rho_{\beta_i}$'s vanishes near $q$. 
In the former case, since $\beta_i>\alpha_1$, by proceeding as in (\ref{3.40a}), one sees that there is a small open neighborhood $V_q\subset W_p$ of $q$  such that for any $f\in C^\infty(M)$ with ${\rm Supp}(f)\subset V_q$, when $T>1$ is large enough, one has
\begin{multline}\label{3.53}
\int_{\mF}f\, d \phi \, T^{\alpha_1}dh_{\alpha_1} \left(\sum_{i=1}^{2n}\beta_i 
\prod_{j\neq i}   \left(d\rho_{\beta_{j}}T^{\beta_j}dh_{\beta_j} \right)\right)  \exp\left( - |Y|_T^2  \right)
\\
-   {\alpha_1}{}
\int_{\mF}f\, d \phi\, T^{\alpha_1}dh_{\alpha_1}\left(\sum_{i=1}^{2n}
\prod_{j\neq i}   \left(d\rho_{\beta_{j}}T^{\beta_j}dh_{\beta_j}\right) \right) 
 \exp\left( - |Y|_T^2  \right) 
=O\left(\frac{(\log T)^{2n-1}}{T}\right)
,
\end{multline}
while in the later case, by an easy vertical transgression argument, one has
\begin{multline}\label{3.54}
\int_{\mF/M} d \phi \, T^{\alpha_1}dh_{\alpha_1} \left(\sum_{i=1}^{2n}\beta_i 
\prod_{j\neq i}   \left(d\rho_{\beta_{j}}T^{\beta_j}dh_{\beta_j} \right)\right)  \exp\left( - |Y|_T^2  \right)
\\
-   {\alpha_1}{}
\int_{\mF/M} d \phi\, T^{\alpha_1}dh_{\alpha_1}\left(\sum_{i=1}^{2n}
\prod_{j\neq i}   \left(d\rho_{\beta_{j}}T^{\beta_j}dh_{\beta_j}\right) \right) 
 \exp\left( - |Y|_T^2  \right) 
=0
\end{multline}
near $q\in M$.

From (\ref{3.52})-(\ref{3.54}) and a simple partition of unity argument, one gets (\ref{3.51}). 
\end{proof}

From (\ref{3.50}) and Lemma \ref{t3.5}, one sees that
\begin{multline}\label{3.55}
 \lim_{T\rightarrow +\infty}\int_{\mF}\phi \prod_{j=1}^{2n} \left(d\rho_{\beta_{j}} T^{\beta_{j}}dh_{\beta_{j}}\right)  
  \exp\left( - |Y|_T^2 \right) = \int_{\mF}\phi \prod_{j=1}^{2n} \left(d\rho_{\beta_{j}}  dh_{\beta_{j}}\right)  
  \exp\left( - |Y|_{T=1}^2 \right)
\\
+ \lim_{T\rightarrow+\infty}\int_1^T\gamma_t\,dt
\end{multline}
exists, which completes the proof of Proposition \ref{t0.1}.

\subsection{Proof of Theorem  \ref{t0.2}}\label{s2.5}
First still assume  that $p\in {\bf B}_+$.  
Recall that by Remark \ref{t2.1}, one has that $\rho_{\alpha_j}=1$ ($1\leq j\leq N_p$) on   $W_p$. 
 Then one has near $\pi^{-1}(p) $ that 
\begin{align}\label{3.44}
 |Y|_{g^{F}_T}^2  =\sum_{i=1}^{2n}\rho_{\beta_{i}}T^{\beta_i}h_{\beta_i}+\sum_{i=1}^{N_p}T^{\alpha_i}h_{\alpha_i}.
\end{align}

We need only to deal with the term 
\begin{align}\label{3.44z}
 \lim_{T\rightarrow +\infty}\int_{\mF}\phi \prod_{i=1}^{2n} \left(d\rho_{\beta_{i}} T^{\beta_{i}}dh_{\beta_{i}}\right)  
  \exp\left( -|Y|_{g^{F}_T}^2  \right) ,
\end{align}
which is examined  in the following lemma.

\begin{lemma}\label{t3.3}
The following identity holds,
\begin{multline}\label{3.44a}
 \lim_{T\rightarrow +\infty}\int_{\mF}\phi \prod_{i=1}^{2n} \left(d\rho_{\beta_{i}} T^{\beta_{i}}dh_{\beta_{i}}\right)  
  \exp\left( -|Y|_{g^{F}_T}^2  \right) 
\\
=  \lim_{T\rightarrow +\infty}\int_{\mF}\phi \prod_{i=1}^{2n} \left(d\rho_{\beta_{i}} T^{\beta_{i}}dh_{\beta_{i}}\right)  
  \exp\left( - |Y|_T^2 \right)
.
\end{multline}
\end{lemma}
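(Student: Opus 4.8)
The plan is to compare the two integrals in \eqref{3.44a} by showing that the extra quadratic terms $\sum_{i=2}^{N_p}T^{\alpha_i}h_{\alpha_i}$ appearing in $|Y|^2_{g^F_T}$ but absent from $|Y|_T^2$ (see \eqref{3.44} versus \eqref{3.47}) contribute nothing in the limit $T\to+\infty$. Concretely, I would interpolate: for $s\in[0,1]$ set
\begin{align}\label{plan1}
|Y|^2_{T,s}=\sum_{i=1}^{2n}\rho_{\beta_i}T^{\beta_i}h_{\beta_i}+T^{\alpha_1}h_{\alpha_1}+s\sum_{i=2}^{N_p}T^{\alpha_i}h_{\alpha_i},
\end{align}
so that $s=0$ gives $|Y|_T^2$ and $s=1$ gives $|Y|^2_{g^F_T}$ (near $\pi^{-1}(p)$, using $\rho_{\alpha_j}=1$ from Remark \ref{t2.1}). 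The goal becomes showing
\begin{align}\label{plan2}
\lim_{T\to+\infty}\int_{\mF}\phi\prod_{i=1}^{2n}\left(d\rho_{\beta_i}T^{\beta_i}dh_{\beta_i}\right)\exp\left(-|Y|^2_{T,s}\right)
\end{align}
is independent of $s\in[0,1]$, which I would do by differentiating in $s$ and showing the derivative tends to $0$.

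Differentiating \eqref{plan2} in $s$ brings down a factor $-\sum_{i=2}^{N_p}T^{\alpha_i}h_{\alpha_i}$. The key estimate is then a Gaussian-decay bound analogous to \eqref{3.36}: near $\pi^{-1}(p)$ one has, for some $c_1>0$,
\begin{align}\label{plan3}
|Y|^2_{T,s}\geq c_1\Bigl(\sum_{i=1}^{2n}\rho_{\beta_i}T^{\beta_i}+T^{\alpha_1}\Bigr)\sum_l\left(y^l_{\alpha_1}\right)^2,
\end{align}
and moreover $T^{\alpha_i}h_{\alpha_i}\le C\,T^{\alpha_1}h_{\alpha_1}$ since $\alpha_i<\alpha_1$ for $i\ge 2$ and the $h_{\alpha_i}$ are comparable (all the $y$-coordinate systems differ by fixed constant linear transformations). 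Hence the insertion of $T^{\alpha_i}h_{\alpha_i}$ is absorbed into the Gaussian weight at the cost of a factor $O(T^{\alpha_i-\alpha_1})=O(1/T)$, and the remaining integral is estimated exactly as in \eqref{3.37}--\eqref{3.40a}: integrating $\prod_{i=1}^{2n}d\rho_{\beta_i}T^{\beta_i}$ against $\exp(-|Y|^2_{T,s})$ over the transversal boundary coordinates $(r_{\beta_1},\dots,r_{\beta_{2n}})$ produces at worst $O((\log T)^{2n})$ (each $\beta_i>\alpha_1$, so each $d\log(\rho_{\beta_i}+T^{\alpha_1-\beta_i})$ integrates to $O(\log T)$). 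Thus $\partial_s$ of \eqref{plan2} is $O((\log T)^{2n}/T)\to 0$, and integrating over $s\in[0,1]$ gives \eqref{3.44a}.

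There is one point needing care: the integrand in \eqref{plan2} is a form on $\mF$ whose vertical-fiber integral must be controlled uniformly, so I would first perform the fiber integration $\int_{\mF/M}$ — a Gaussian integral in the $y$-variables — reducing everything to an integral over a neighborhood of $p$ in $M$ in the boundary-normal coordinates $r_{\beta_i}$, and only then apply the $\log T$ estimates. I expect the main obstacle to be bookkeeping rather than conceptual: one must check that after fiber integration the resulting function of $(r_{\beta_1},\dots,r_{\beta_{2n}})$ genuinely has the product form appearing in \eqref{3.40a}, so that the transversality of the $\partial U_{\beta_i}$ can be used to factor the $M$-integral into $2n$ one-dimensional integrals each contributing $O(\log T)$; the extra $s$-term, being comparable to $T^{\alpha_1}h_{\alpha_1}$, does not disturb this factorization but does require re-deriving the denominator bound \eqref{plan3} with the $s$-dependent term present. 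Once that is in place, the argument is a routine adaptation of the analysis in Section \ref{s2.3} and Lemma \ref{t3.5}.
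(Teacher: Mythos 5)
Your argument is correct, but it takes a genuinely different route from the paper's. The paper does not interpolate and estimate directly; it exploits the algebraic identities \eqref{3.45} and \eqref{3.45a} for the operator $d^H-i_Y$ to write the difference of the two integrands as a $(d^H-i_Y)$-exact form plus a term proportional to $d\phi$ (see \eqref{3.46}), so that after the Stokes formula the whole difference collapses to the single boundary integral \eqref{3.48} supported on ${\rm Supp}(d\phi)$, which is then killed using the bound \eqref{3.49} on $\frac{1-e^{-t}}{t}$ together with the $T^{\alpha_i-\alpha_1}\le 1/T$ gain, as in \eqref{3.40a}. Your interpolation $|Y|^2_{T,s}$ replaces this transgression by the elementary pointwise bound $\sum_{i\ge 2}T^{\alpha_i}h_{\alpha_i}\le C\,T^{-1}\,T^{\alpha_1}h_{\alpha_1}\le C\,T^{-1}|Y|^2_{T,s}$ --- legitimate because the trivializations over $W_p$ differ by constant linear maps (flatness), so the $h_\alpha$ are mutually comparable there, and $T^{\alpha_1}h_{\alpha_1}$ is one of the nonnegative summands of $|Y|^2_{T,s}$ --- absorbs the resulting factor into the Gaussian via $te^{-t}\le e^{-t/2}$, and then pays an $O\left((\log T)^{2n}\right)$ price from the base integral of $\prod_i d\log\left(\rho_{\beta_i}+T^{\alpha_1-\beta_i}\right)$ exactly as in \eqref{3.37}--\eqref{3.40a}; since $O\left((\log T)^{2n}/T\right)\to 0$ uniformly in $s$ and the $s=0$ limit exists by Proposition \ref{t0.1}, the identity \eqref{3.44a} follows. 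What the paper's route buys is localization: the difference lives on ${\rm Supp}(d\phi)$, only $2n-1$ logarithmic factors ever appear, and the estimate reuses the machinery already set up for Lemma \ref{t3.5}. What your route buys is the avoidance of the transgression bookkeeping altogether, at the cost of controlling the full $2n$-fold logarithmic divergence at $p$ itself and of invoking the comparability of the $h_\alpha$, which you should state explicitly since it is the one place where the constancy of the transition functions enters your argument.
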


\begin{proof} 

 From  (\ref{3.47}), (\ref{3.45}), (\ref{3.45a}) and  (\ref{3.44}),  one deduces that
\begin{multline}\label{3.46}
 \frac{ \phi}{2^{2n}} \prod_{i=1}^{2n} \left(d\rho_{\beta_{i}} T^{\beta_{i}}dh_{\beta_{i}}\right)  
  \exp\left( -|Y|_{g^{F}_T}^2  \right) 
-  \frac{ \phi}{2^{2n}} \prod_{i=1}^{2n} \left(d\rho_{\beta_{i}} T^{\beta_{i}}dh_{\beta_{i}}\right)  
  \exp\left( -|Y|_T^2 \right)
\\
=\left\{ \phi \prod_{i=1}^{2n} \left(e^{\left(d^H-i_Y\right) \left(\rho_{\beta_{i}}T^{\beta_i}dh_{\beta_i}/2\right) }\right) e^{- T^{\alpha_1}h_{\alpha_1}} 
   \left( e^{\left(d^H-i_Y\right) \sum_{i=2}^{N_p}T^{\alpha_i}dh_{\alpha_i} /2} - 1  \right) \right\}^{(4n)}
\\
=\left\{\left( d^H-i_Y\right) \left(\phi\,  \prod_{i=1}^{2n} \left(e^{\left(d^H-i_Y\right) \left(\rho_{\beta_{i}}T^{\beta_i}dh_{\beta_i}/2\right)}\right) e^{- T^{\alpha_1}h_{\alpha_1}} 
   \left( \sum_{i=2}^{N_p}T^{\alpha_i}dh_{\alpha_i} /2\right) 
\frac{e^{-\sum_{i=2}^{N_p}T^{\alpha_i}h_{\alpha_i}}  - 1}{-\sum_{i=2}^{N_p}T^{\alpha_i}h_{\alpha_i}  } \right)\right.
\\
- \left. d\phi\,  \prod_{i=1}^{2n} \left(e^{\left(d^H-i_Y\right) \left(\rho_{\beta_{i}}T^{\beta_i}dh_{\beta_i}/2\right)}\right) e^{- T^{\alpha_1}h_{\alpha_1}} 
   \left( \sum_{i=2}^{N_p}T^{\alpha_i}dh_{\alpha_i}/2 \right) 
\frac{e^{-\sum_{i=2}^{N_p}T^{\alpha_i}h_{\alpha_i}}  - 1}{-\sum_{i=2}^{N_p}T^{\alpha_i}h_{\alpha_i}  }   
\right\}^{(4n)}
.
\end{multline}

From (\ref{3.47}), (\ref{3.45a}), (\ref{3.46}) and the Stokes formula, one gets
\begin{multline}\label{3.48}
 \int_{\mF}\phi \prod_{h=1}^{2n} \left(d\rho_{\beta_{h}} T^{\beta_{h}}dh_{\beta_{h}}\right)  
  \exp\left( -|Y|_{g^{F}_T}^2  \right) 
-   \int_{\mF}\phi \prod_{h=1}^{2n} \left(d\rho_{\beta_{h}} T^{\beta_{h}}dh_{\beta_{h}}\right)  
  \exp\left( - |Y|_T^2  \right)
\\
=\int_{\mF}d\phi\left(\sum_{h=1}^{2n}  \prod_{i\neq h} \left( d  \rho_{\beta_{i}}T^{\beta_i}dh_{\beta_i} \right)\right) 
   \left( \sum_{i=2}^{N_p}T^{\alpha_i}dh_{\alpha_i} \right) 
\frac{e^{-\sum_{i=2}^{N_p}T^{\alpha_i}h_{\alpha_i}}  - 1}{\sum_{i=2}^{N_p}T^{\alpha_i}h_{\alpha_i}  }    \exp\left( - |Y|_T^2  \right)
.
\end{multline}

Recall that  $\alpha_1>\alpha_i$ for $i\geq 2$. Also,  for any $t\geq 0$, one has
\begin{align}\label{3.49}
0\leq\frac{1-e^{-t}}{t}\leq 1.
\end{align}

By (\ref{3.47}), (\ref{3.48}), (\ref{3.49})  and proceeding as in (\ref{3.40a}), one gets (\ref{3.44a}). 
\end{proof}

Now   to prove Theorem \ref{t0.2}, 
we choose a finite selection of $V_p$'s in Propositions \ref{t0.1} and  \ref{t3.2} so that they form an open covering of $M$. Let $\{f_{V_p}\}$'s be a partition of unity subordinate to this open covering.  We  assume that each    $p\in  {\bf B_+} $ is covered by only one $V_p$ on which $f_{V_p}=1$ near $p$.  Since ${\bf B}_+$ consists of finite points, the existence of such a covering and partition of unity is clear.  

   Theorem \ref{t0.2} then follows from Propositions \ref{t0.1}, \ref{t3.2a}, \ref{t3.2} and Lemma \ref{t3.3} as follows, 
\begin{align}\label{3.66}
 \langle e(F),[M]\rangle =\left(\frac{1}{2\pi}\right)^{2n}\sum\lim_{T\rightarrow +\infty}\int_{\mF}f_{V_p}\exp\left(\frac{1}{2}d^Hd^V|Y|^2_{g^{F}_T}-|Y|^2_{g^{F}_T}\right) =\sum_{q\in{\bf B}_+}\nu_q .
\end{align}

 $\ $

\noindent{\bf Acknowledgments.} We thank   Fei Han, Kefeng Liu, Xiaonan Ma, Shu Shen and Guangxiang Su 						for helpful discussions.

\end{document}